\numberwithin{equation}{section}
\theoremstyle{plain}
\newtheorem{theorem}{Theorem}[section]
\newtheorem{lemma}[theorem]{Lemma}
\newtheorem{proposition}[theorem]{Proposition}
\newtheorem{conjecture}[theorem]{Conjecture}
\theoremstyle{definition}
\newtheorem{definition}[theorem]{Definition}
\newtheorem{example}[theorem]{Example}
\theoremstyle{remark}
\newtheorem{remark}[theorem]{Remark}
\newcommand{\A}{\mathcal{A}}
\newcommand{\Z}{\mathbb{Z}}
\newcommand{\R}{\mathbb{R}}
\newcommand{\C}{\mathbb{C}}
\newcommand{\Cat}{\operatorname{Cat}}
\newcommand{\Der}{\operatorname{Der}}
\newcommand{\Ker}{\operatorname{Ker}}
\newcommand{\Spec}{\operatorname{Spec}}
\begin{document}


\title{Integral expressions for derivations of multiarrangements}

\begin{abstract}
The construction of an explicit basis for a free multiarrangement 
is not easy in general. Inspired by the integral expressions for 
quasi-invariants of quantum Calogero-Moser systems, 
we present integral expressions for specific bases of 
certain multiarrangements. Our construction covers the cases of 
three lines in dimension $2$ (previously examined by Wakamiko) and 
free multiarrangements associated with 
complex reflection groups (Hoge, Mano, R\"ohrle, Stump). 
Furthermore, we propose a conjectural basis for the module of 
logarithmic vector fields of 
the extended Catalan arrangement of type $B_2$. 
\end{abstract}

\author{Misha Feigin}
\address{Misha Feigin, 
University of Glasgow}
\email{misha.feigin@glasgow.ac.uk}

\author{Zixuan Wang}
\address{Zixuan Wang, 
Hokkaido University
}
\email{zixuan.wang.o1@elms.hokudai.ac.jp}

\author{Masahiko Yoshinaga}
\address{Masahiko Yoshinaga, 
Osaka University}
\email{yoshinaga@math.sci.osaka-u.ac.jp}

\thanks{
The authors thank Takuro Abe and Shuhei Tsujie 
for helpful discussions. 
They also thank Theo Douvropoulos and Christian Stump for helpful discussions related to Proposition \ref{multibraid}. 
M. F. was partially supported by the EPSRC grant EP/W013053/1. 
Z. W. was supported by JST SPRING, Grant Number JPMJSP2119. 
M. Y. was partially supported by 
JSPS KAKENHI Grant Numbers JP18H01115, JP15KK0144, JP23H00081}


\subjclass[2010]{Primary 52C35, Secondary 20F55}
\keywords{Hyperplane arrangements, Free arrangements, Catalan arrangements, Shi arrangements}

\date{\today}

\maketitle


\section{Introduction}
\label{sec:intro}

Let $V$ be an $\ell$-dimensional linear space over $\C$. 
Let $\{x_1, \dots, x_\ell\}$ be a basis of $V^*$ and let 
$S=S(V^*)=\C[x_1, \dots, x_\ell]$ be the polynomial ring. 
Define $\Der_S=\bigoplus_{i=1}^\ell S\partial_i$ as the module of 
polynomial vector fields ($\C$-linear derivations of $S$), 
where $\partial_i=\frac{\partial}{\partial x_i}$. 

Let $\A=\{H_1, \dots, H_n\}$ be a central arrangement of 
hyperplanes. The pair $(\A, m)$ consisting of $\A$ and a map 
$m:\A\longrightarrow\Z_{\geq 0}$ is called a multiarrangement. 
For each $H\in\A$, choose a linear form $\alpha_H\in V^*$ such that 
$H=\Ker(\alpha_H)$. The polynomial 
\[
Q(\A, m)=\prod_{H\in\A}\alpha_H^{m(H)}
\]
is the defining equation of the multiarrangement $(\A, m)$. 

For a multiarrangement $(\A, m)$, 
define the module of multiderivations $D(\A, m)$ by 
\begin{equation}
D(\A, m):=\{\delta\in\Der_S\mid \delta\alpha_H\in(\alpha_H^{m(H)}), 
\mbox{ for any }H\in\A\}. 
\end{equation}
The multiarrangement $(\A, m)$ is said to be free with 
exponents $(e_1, \dots, e_\ell)$ if $D(\A, m)$ is a free $S$-module 
with a basis $\{\delta_1, \dots, \delta_\ell\}\subset D(\A, m)$ of the 
form $\delta_i=\sum_{j=1}^\ell f_{ij}\partial_j$, 
where each non-zero $f_{ij}$ 
is a homogeneous polynomial of $\deg f_{ij}=e_i$. 
The module $D(\A, m)$ was first introduced by K. Saito 
in \cite{sai-log} for $m\equiv 1$ and by Ziegler \cite{zie} 
for general $m$. 
$D(\A, 1)$ is simply denoted by $D(\A)$. 
We also define $\deg \delta_i = e_i$.

We will frequently use the following criterion for the freeness 
of $D(\A, m)$. 
\begin{proposition}
\label{prop:szc}
(Saito-Ziegler criterion for freeness) 
Let $(\A, m)$ be a multiarrangement. 
Suppose that there exist homogeneous vector fields 
$\delta_1, \dots, \delta_\ell\in D(\A, m)$ such that 
\begin{itemize}
\item 
$\delta_1, \dots, \delta_\ell$ are linearly independent over $S$, and 
\item 
$\sum_{i=1}^\ell\deg\delta_i=\sum_{H\in\A} m(H)$. 
\end{itemize}
Then $D(\A, m)$ is a free $S$-module with a basis 
$\{\delta_1, \dots, \delta_\ell\}$. 
\end{proposition}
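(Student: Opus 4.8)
The plan is to establish the Saito–Ziegler criterion by exhibiting a determinant identity that forces any collection of $\ell$ linearly independent multiderivations satisfying the degree condition to form a basis. First I would form the $\ell \times \ell$ coefficient matrix $M = (f_{ij})$, where $\delta_i = \sum_{j=1}^\ell f_{ij}\partial_j$, and consider its determinant $\det M \in S$. Since the $\delta_i$ are linearly independent over $S$, we have $\det M \neq 0$, and because each $\delta_i$ is homogeneous of degree $\deg\delta_i$, the determinant is a homogeneous polynomial of degree $\sum_{i=1}^\ell \deg\delta_i$.

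The key step is to show that $Q(\A,m) = \prod_{H\in\A}\alpha_H^{m(H)}$ divides $\det M$ in $S$. To see this, I would fix $H\in\A$ and argue that $\alpha_H^{m(H)}$ divides $\det M$ by localizing at the prime ideal $(\alpha_H)$ (or working modulo $\alpha_H$). The defining condition $\delta_i\alpha_H \in (\alpha_H^{m(H)})$ means that, after a linear change of coordinates placing $\alpha_H = x_1$, each $\delta_i$ has its $\partial_1$-coefficient divisible by $x_1^{m(H)}$; expanding $\det M$ along the corresponding column shows $x_1^{m(H)} \mid \det M$, hence $\alpha_H^{m(H)} \mid \det M$. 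Since distinct hyperplanes give coprime linear forms $\alpha_H$, these divisibilities combine to give $Q(\A,m)\mid \det M$.

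Now I compare degrees: $\deg Q(\A,m) = \sum_{H\in\A} m(H)$, which by hypothesis equals $\sum_{i=1}^\ell \deg\delta_i = \deg\det M$. Since $Q(\A,m)$ divides the nonzero polynomial $\det M$ and the two have equal degree, they agree up to a nonzero scalar, so $\det M = c\cdot Q(\A,m)$ with $c\in\C^\times$. Finally I would invoke the standard structural fact (the core of Saito's original criterion, valid verbatim in the multiarrangement setting of Ziegler) that $D(\A,m)$ is a reflexive, hence free-when-saturated, $S$-module of rank $\ell$, and that the nonvanishing of $\det M$ up to the correct factor $Q(\A,m)$ certifies that $\delta_1,\dots,\delta_\ell$ generate $D(\A,m)$ over $S$: any $\delta\in D(\A,m)$ can be written as $\delta = \sum g_i\delta_i$ by Cramer's rule, and the divisibility of the numerators by $\det M$ follows precisely because $\delta$ satisfies the same defining congruences that produced the factor $Q(\A,m)$.

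The main obstacle will be the last step, namely proving that the coefficients $g_i$ obtained from Cramer's rule actually lie in $S$ rather than merely in the fraction field. The honest argument requires showing that for each $H$, the numerator determinants (formed by replacing a column of $M$ with the coefficient vector of $\delta$) are divisible by $\alpha_H^{m(H)}$, which again reduces to the column-wise divisibility argument but now applied to a matrix mixing $\delta$ with the $\delta_i$; keeping careful track of which congruences survive the determinant expansion, and confirming that all factors of $Q(\A,m)$ are recovered with the correct multiplicities, is the delicate point. I would therefore devote most care to formalizing the local divisibility lemma ``$\delta\alpha_H\in(\alpha_H^{m(H)})$ for all rows implies $\alpha_H^{m(H)}\mid\det$'' and then applying it uniformly to both $\det M$ and the numerator determinants.
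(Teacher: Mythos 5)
The paper itself gives no proof of this proposition: it is quoted as the known Saito--Ziegler criterion (Saito \cite{sai-log} for $m\equiv 1$, Ziegler \cite{zie} for general $m$), so there is no in-paper argument to compare against; your proposal must be judged on its own. Judged so, it is correct and is precisely the standard literature proof. The three pillars are all present and sound: (i) the divisibility lemma, proved by a linear change of coordinates making $\alpha_H=x_1$, under which the defining condition $\delta_i\alpha_H\in(\alpha_H^{m(H)})$ makes one entire column of the Saito matrix divisible by $x_1^{m(H)}$, whence $\alpha_H^{m(H)}\mid\det M$ and, by pairwise coprimality of the $\alpha_H$, $Q(\A,m)\mid\det M$; (ii) the degree comparison $\deg\det M=\sum_i\deg\delta_i=\sum_H m(H)=\deg Q(\A,m)$ together with $\det M\neq 0$ (equivalent to $S$-linear independence over the domain $S$), forcing $\det M=c\,Q(\A,m)$, $c\in\C^\times$; (iii) the Cramer's-rule step, where you correctly identify the crux: each numerator determinant is itself the Saito determinant of a family of elements of $D(\A,m)$ (namely $\delta$ substituted for one $\delta_i$), so the divisibility lemma of (i) applies verbatim to it, the quotient by $\det M=c\,Q(\A,m)$ lies in $S$, and generation plus $S$-linear independence yields freeness with basis $\{\delta_1,\dots,\delta_\ell\}$. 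Two small blemishes, neither a gap: with your convention $\delta_i=\sum_j f_{ij}\partial_j$, the coefficient vector of $\delta$ replaces a \emph{row} of $M$, not a column (a transposition-convention slip); and the aside invoking reflexivity of $D(\A,m)$ is unnecessary and vaguely phrased (``free-when-saturated''), since your Cramer argument already establishes generation directly and needs no structural input about $D(\A,m)$ beyond its definition.
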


The module $D(\A)$ plays a crucial role in the study 
of hyperplane arrangements \cite{ot}. 
The algebraic structures of these modules are thought to reflect 
the combinatorial structures of $\A$. 
For example, when $D(\A)$ is a free module, Tearo's factorization 
theorem \cite{ter-fact} asserts that the characteristic polynomial 
of $\A$ factors into the product of linear terms over $\Z$. 
The module $D(\A, m)$ of multiderivations is also important 
for characterizing the freeness of $D(\A)$ 
\cite{yos-char, yos-survey}. 

Multiderivations for Coxeter arrangements have various 
relations with other research topics, e.g., flat structures 
\cite{sai-prim, sai-lin, sai-unif}, Frobenius manifolds \cite{dub}. 
In recent years, the connection between such multiderivations and 
quantum integrable systems has become apparent \cite{aefy}, 
which is linked 
to significant advancements in \cite{arsy, hmrs}. 
Indeed, it has been observed that (the invariant part of) the 
module $D(\A, m)$ is identified with a space of quasi-invariants 
for the quantum generalized Calogero-Moser system \cite{cha-ves, fei-ves}. 
The relationship between these two fields is expected to 
enhance our understanding of both research topics. 

Despite numerous studies, it is still difficult to construct 
an explicit basis for the module $D(\A, m)$ of 
multiderivations, even in simple cases, e.g., 
$2$-dimensional arrangements \cite{wak-exp, wak-yuz}. 

The purpose of this paper is to investigate integral expressions 
for the basis of $D(\A, m)$, inspired by the integral expressions 
of quasi-invariants \cite{bm, cha-ves, fei-ves, fv}. 
The paper is organized as follows. In \S \ref{sec:3lines}, 
we examine the basis for the multiarrangement on three lines, 
specifically that of the form $x_1^p x_2^q (x_1-x_2)^r$. 
The freeness of $D(\A, m)$ and the exponents are already known 
\cite{zie, wak-yuz}, as well as an existing basis \cite{wak-bas}. 
However, in this work, we present a novel and 
convenient expression for the basis using integrals. 
Additionally, we discuss a higher dimensional analogue. 

In \S \ref{sec:main}, we discuss multiarrangements 
associated with complex reflection groups. In dimension $\geq 3$, 
the freeness of $D(\A, m)$ depends on the multiplicity $m$. 
In \S \ref{subsec:freemultiarr}, 
we recall two families of free multiarrangements 
$(\A, \mu)$ and $(\A, \mu')$, discovered by 
Hoge, Mano, R\"ohrle, and Stump \cite{hmrs}. 
Subsequently, in \S \ref{subsec:mu} and \S \ref{subsec:muprime}, 
we provide explicit integral expressions for the basis. 
In \S \ref{subsec:special}, we consider special cases, 
including constant multiplicities for type $B_\ell$ and 
$D_\ell$ arrangements. 

In \S \ref{sec:prim}, we discuss the relationship between 
integral expressions and K. Saito's primitive derivation. 
Saito first introduced the primitive derivation and the Hodge 
filtration on the module of logarithmic derivations in the 
context of singularity theory \cite{sai-prim}. 
Later, Saito described these 
structures in terms of invariant theory for Coxeter 
arrangements \cite{sai-lin, sai-unif}. Recently, these 
structures have been generalized to well-generated 
complex reflection groups \cite{hmrs} based on 
the study of the Okubo system of linear differential equations 
\cite{kms}. We observe that our integral expressions are 
consistent with the primitive derivation and the Hodge filtration. 

In \S \ref{sec:conj}, we propose a conjectural basis for the 
extended Catalan arrangements of type $B_2$.

\section{Integral expressions for certain multiarrangements}
\label{sec:3lines}

\subsection{Three lines}

In this section, we consider the multiarrangement $(\A, m)$ 
defined by $x_1^p x_2^q (x_1-x_2)^r$ in $\R^2$ 
(Figure \ref{fig:three}). 
\begin{figure}[htbp]
\centering
\begin{tikzpicture}[scale=0.8]


\draw[thick, black] (2,0) -- +(0,4) node[above] {$p$} ; 
\draw[thick, black] (0,2)  -- +(4,0) node[right] {$q$}; 
\draw[thick, black] (0.5,0.5)  -- +(3,3) node[right] {$r$}; 

\end{tikzpicture}
\caption{The multiarrangement $x_1^p x_2^q (x_1-x_2)^r$} 
\label{fig:three}
\end{figure}
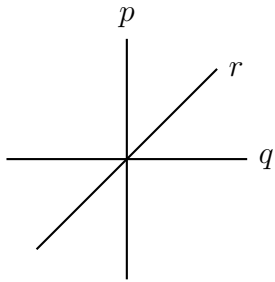
Without loss of generality, we may assume 
\begin{equation}
\max\{p, q\}\leq r. 
\end{equation}
If $r\geq p+q-1$, then 
it is easily seen that the vector fields 
\begin{equation}
\begin{split}
&x_1^p x_2^q(\partial_1+\partial_2), \mbox{ and }\\
&\left(\sum_{i=p}^r\binom{r}{i}(-x_1)^i x_2^{r-i}\right)\partial_1-
\left(\sum_{i=0}^{p-1}\binom{r}{i}(-x_1)^i x_2^{r-i}\right)\partial_2
\end{split}
\end{equation}
form a basis of $D(\A, m)$. 
When $r< p+q-1$, the construction of the basis is more involved. 
Wakamiko \cite[Theorem 1.5]{wak-exp} constructed a basis using 
the generalized binomial coefficients. 
Here, we construct a basis with an integral expression. 

\begin{definition}
Let $a, b, c\in\Z_{\geq 0}$. 
Define $\theta_{a, b, c}\in\Der_S$ to be 
\[
\theta_{a, b, c}=
\left(
\int^{x_1}t^c(t-x_1)^b(t-x_2)^adt
\right)\partial_1
+
\left(
\int^{x_2}t^c(t-x_1)^b(t-x_2)^adt
\right)\partial_2, 
\]
where $\int^{x_i}dt$ is the linear operator defined by 
$\int^{x_i}t^kdt=\frac{x_i^{k+1}}{k+1}$ for $k\neq -1$. 
When $k\geq 0$, it is equivalent to 
the definite integral $\int_0^{x_i}dt$. Later we will also need the 
same operator for $k<-1$. In this case, 
$\int^{x_i}dt$ is equivalent to $\int_\infty^{x_i}dt$.  
\end{definition}
The following are the basic properties of $\theta_{a, b, c}$. 
\begin{proposition}
\label{prop:contain}
The degree of $\theta_{a, b, c}$ is 
$\deg\theta_{a, b, c}=a+b+c+1$ and $\theta_{a, b, c}$ is contained in 
$D(\A, m)$ for $Q(\A, m)=x_1^{b+c+1}x_2^{a+c+1}(x_1-x_2)^{a+b+1}$. 
\end{proposition}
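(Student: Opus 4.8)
The plan is to verify the two assertions separately: the degree count is immediate, and membership in $D(\A,m)$ reduces to three divisibility statements that all follow from a single change-of-variables trick in the defining integrals.

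For the degree, I would first observe that the integrand $t^c(t-x_1)^b(t-x_2)^a$ is a homogeneous polynomial of degree $a+b+c$ in the three variables $t,x_1,x_2$, with all powers of $t$ nonnegative because $a,b,c\geq 0$. Applying $\int^{x_i}$ sends $t^k\mapsto x_i^{k+1}/(k+1)$ and hence raises the total degree by exactly one. Therefore both coefficient functions
\[
f_1:=\int^{x_1}t^c(t-x_1)^b(t-x_2)^a\,dt,\qquad f_2:=\int^{x_2}t^c(t-x_1)^b(t-x_2)^a\,dt
\]
are homogeneous of degree $a+b+c+1$, giving $\deg\theta_{a,b,c}=a+b+c+1$.

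For the membership, by the definition of $D(\A,m)$ it suffices to check, writing $\theta=\theta_{a,b,c}=f_1\partial_1+f_2\partial_2$, that $\theta(x_1)=f_1\in(x_1^{b+c+1})$, that $\theta(x_2)=f_2\in(x_2^{a+c+1})$, and that $\theta(x_1-x_2)=f_1-f_2\in((x_1-x_2)^{a+b+1})$. Since the integrand is a genuine polynomial in $t$, the operator $\int^{x_i}$ coincides with the definite integral $\int_0^{x_i}$, so I may treat $f_1,f_2$ as honest definite integrals. The first two divisibilities come from rescaling: substituting $t=x_1u$ gives
\[
f_1=x_1^{\,b+c+1}\int_0^1 u^c(u-1)^b(x_1u-x_2)^a\,du,
\]
and the remaining integral is a polynomial, so $f_1\in(x_1^{b+c+1})$; the substitution $t=x_2u$ yields $f_2\in(x_2^{a+c+1})$ symmetrically.

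The third and only delicate divisibility is where the integral expression really pays off. The key point is that $f_1$ and $f_2$ use the \emph{same} antiderivative in $t$, so the constant of integration cancels in the difference and
\[
f_1-f_2=\int_{x_2}^{x_1}t^c(t-x_1)^b(t-x_2)^a\,dt.
\]
Now I substitute $t=x_2+(x_1-x_2)u$, under which $t-x_1=(x_1-x_2)(u-1)$, $t-x_2=(x_1-x_2)u$, and $dt=(x_1-x_2)\,du$, collecting a factor $(x_1-x_2)^{a+b+1}$:
\[
f_1-f_2=(x_1-x_2)^{a+b+1}\int_0^1\bigl(x_2+(x_1-x_2)u\bigr)^c(u-1)^b u^a\,du.
\]
Since the remaining integral is again polynomial, this gives $f_1-f_2\in((x_1-x_2)^{a+b+1})$ and completes the proof. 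I expect the main (small) obstacle to be justifying that these substitutions are legitimate polynomial identities rather than merely analytic ones, and in particular confirming the endpoint and constant-of-integration bookkeeping behind $f_1-f_2=\int_{x_2}^{x_1}(\cdots)\,dt$; this is precisely where one uses that all exponents are nonnegative, so that the antiderivative convention of $\int^{x_i}$ matches $\int_0^{x_i}$.
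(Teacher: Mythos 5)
Your proof is correct and takes essentially the same approach as the paper: both arguments hinge on the fact that $f_1$ and $f_2$ come from a single antiderivative, so the constant of integration cancels and $f_1-f_2=\int_{x_2}^{x_1}t^c(t-x_1)^b(t-x_2)^a\,dt$, after which a change of variables (your affine substitution $t=x_2+(x_1-x_2)u$; the paper's translation $t=x_1+s$) exhibits the factor $(x_1-x_2)^{a+b+1}$. The only difference is one of completeness: the paper treats just this hyperplane and dismisses the coordinate-hyperplane divisibilities as ``proved similarly,'' whereas you carry them out explicitly via the rescalings $t=x_iu$.
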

\begin{proof}
The claim 
$\deg\theta_{a, b, c}=a+b+c+1$ is clear. 
Let us compute the order of vanishing of $\theta_{a, b, c}(x_1-x_2)$ at $x_1=x_2$. 
Let $t=x_1+s$. Then 
\[
\begin{split}
\theta_{a, b, c}(x_2-x_1)&=\int_{x_1}^{x_2}t^c(t-x_1)^b(t-x_2)^adt\\
&=
\int_{0}^{x_2-x_1}(x_1+s)^c s^{b} (s-(x_2-x_1))^ads
\end{split}
\]
which is divisible by $(x_1-x_2)^{a+b+1}$. 
Other assertions are proved similarly. 
\end{proof}

\begin{theorem}
Let $(\A, m)$ be the multiarrangement defined by 
$x_1^px_2^q(x_1-x_2)^r$ with $p, q>0$ and $r\leq p+q-1$. 
\begin{itemize}
\item[(1)] 
Suppose $p+q+r$ is odd. Let 
\[
(a, b, c)=
\left(
\frac{-p+q+r-1}{2}, 
\frac{p-q+r-1}{2}, 
\frac{p+q-r-1}{2}
\right).
\]
Then $\theta_{a, b, c}$ and $\theta_{a, b, c+1}$ form a basis of 
$D(\A, m)$. 
\item[(2)] 
Suppose $p+q+r$ is even. Let 
\[
(a, b, c)=
\left(
\frac{-p+q+r}{2}, 
\frac{p-q+r-2}{2}, 
\frac{p+q-r-2}{2}
\right), 
\]
and 
\[
(a', b', c')=
\left(
\frac{-p+q+r-2}{2}, 
\frac{p-q+r}{2}, 
\frac{p+q-r-2}{2}
\right). 
\]
Then $x_1\theta_{a, b, c}$ and $x_2\theta_{a', b', c'}$ 
form a basis of $D(\A, m)$. 
\end{itemize}
\end{theorem}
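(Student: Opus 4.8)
The plan is to verify the two hypotheses of the Saito--Ziegler criterion (Proposition \ref{prop:szc}) for $\ell=2$. In each case I must check three things: that the two proposed vector fields lie in $D(\A,m)$, that their degrees sum to $\sum_{H\in\A}m(H)=p+q+r$, and that they are linearly independent over $S$. The first two follow immediately from Proposition \ref{prop:contain} together with elementary arithmetic; the third is the heart of the matter.

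For the containment I would read off from Proposition \ref{prop:contain} the multiplicity for which each $\theta$ is a multiderivation. In case (1), the given $(a,b,c)$ yields $(b+c+1,a+c+1,a+b+1)=(p,q,r)$, so $\theta_{a,b,c}\in D(\A,m)$ exactly, while $\theta_{a,b,c+1}$ lies in $D(\A,(p+1,q+1,r))\subseteq D(\A,m)$, since raising a multiplicity only strengthens the defining divisibility conditions. In case (2) the same substitution gives $\theta_{a,b,c}\in D(\A,(p-1,q,r))$ and $\theta_{a',b',c'}\in D(\A,(p,q-1,r))$; multiplying a derivation by $x_1$ (resp. $x_2$) raises the $x_1$- (resp. $x_2$-)multiplicity by one without affecting the others, so $x_1\theta_{a,b,c}$ and $x_2\theta_{a',b',c'}$ both lie in $D(\A,m)$. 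The degree bookkeeping is routine: in case (1) one has $a+b+c=\tfrac{p+q+r-3}{2}$, so the degrees $a+b+c+1$ and $a+b+c+2$ sum to $p+q+r$, and in case (2) each of $x_1\theta_{a,b,c}$, $x_2\theta_{a',b',c'}$ has degree $\tfrac{p+q+r}{2}$.

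For linear independence over $S$ it suffices to show that the determinant $M$ of the $2\times2$ coefficient matrix is a nonzero polynomial. A standard argument --- choosing, for each line, coordinates in which its defining form is a variable --- shows that $Q(\A,m)$ divides the coefficient determinant of any two elements of $D(\A,m)$; since $\deg M=p+q+r=\deg Q$, we obtain $M=c\,Q$ for a constant $c$, and everything reduces to proving $c\neq0$. Here I would use the identity
\[
\Big(\int_0^{x_1}\!\!A(s)\,ds\Big)\Big(\int_0^{x_2}\!\!B(t)\,dt\Big)-\Big(\int_0^{x_1}\!\!B(s)\,ds\Big)\Big(\int_0^{x_2}\!\!A(t)\,dt\Big)=\int_0^{x_1}\!\!\int_0^{x_2}\big(A(s)B(t)-A(t)B(s)\big)\,dt\,ds,
\]
which rewrites $M$ as a double integral of $\phi(s)\phi(t)(t-s)$ with $\phi(u)=u^c(u-x_1)^b(u-x_2)^a$. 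To compute $c$ I would set $x_1=1$, so that $M(1,x_2)=c\,x_2^q(1-x_2)^r$ and $c$ is the coefficient of the lowest-order term $x_2^q$. Since $a+c+1=q$, the factor $\int_0^{x_2}$ contributes a leading power $x_2^q$ with coefficient a Beta integral equal to $(-1)^{a+b}\tfrac{a!\,c!}{q!}$, while the $\int_0^{x_1}$ factor tends to a nonzero Beta value as $x_2\to0$; multiplying these yields $c$ as an explicit product and quotient of factorials, hence $c\neq0$.

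The main obstacle is precisely this nonvanishing of $c$. The integrand $\phi(s)\phi(t)(t-s)$ is not of constant sign on the rectangle $[0,x_1]\times[0,x_2]$, because the factors $(u-x_1)^b$ and $(u-x_2)^a$ change sign, so no soft positivity argument is available and one is forced to evaluate the relevant Beta integrals explicitly. Case (2) is handled in the same way: the determinant of $x_1\theta_{a,b,c}$ and $x_2\theta_{a',b',c'}$ factors as $x_1x_2(x_1-x_2)$ times a double integral of the same shape --- using $b'=b+1$, $a'=a-1$, $c'=c$ to see that the two integrands differ by the factor $(t-x_1)/(t-x_2)$, which produces the extra $(x_1-x_2)$ --- after which the identical Beta-function computation shows the leading constant is nonzero.
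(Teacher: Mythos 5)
Your proof is correct, and while it shares the paper's scaffolding (containment via Proposition \ref{prop:contain}, degree bookkeeping, reduction to nonvanishing of the Saito determinant), the key step is done by a genuinely different method. The paper handles linear independence softly and algebraically: it extracts the highest-degree term with respect to $x_1$ from each entry of the Saito matrix and notes that the two cross-products have \emph{different} $x_1$-degrees ($a+2b+c+1$ versus $a+2b+c+2$), so the top term $x_1^{a+2b+c+2}x_2^{a+c+1}$ survives with a nonzero coefficient; no evaluation and no auxiliary lemmas are needed. You instead rewrite the determinant as the double integral of $\phi(s)\phi(t)(t-s)$, invoke the standard divisibility of the Saito determinant by $Q(\A,m)$ (a fact the paper never uses), and evaluate the resulting constant by Beta integrals at $x_1=1$, $x_2\to 0$. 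This is heavier machinery, but it buys two things the paper's proof does not give: the exact proportionality constant in $\det = c\,Q$ in closed form, and a structural explanation of case (2), since your kernel factorization $\phi_1(s)\phi_2(t)-\phi_2(s)\phi_1(t)=(x_1-x_2)(t-s)\psi(s)\psi(t)$ reduces it to case (1), whereas the paper simply redoes the leading-term computation. Also, your direct appeal to Proposition \ref{prop:szc} avoids citing Wakamiko's exponents, making the argument self-contained. One step you should state explicitly: in the limit $x_2\to 0$ the determinant is a difference of two products, and no cancellation occurs only because $\int_0^{x_2} t\phi\,dt = O(x_2^{q+1})$ while $\int_0^{x_2}\phi\,dt \sim x_2^{q}$; this "different orders" asymmetry is implicit in your write-up, and it is in fact the same asymmetry the paper exploits monomially.
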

\begin{proof}
(1) It is clear that $\theta_{a, b, c}$ and $\theta_{a, b, c+1}$ are 
contained in $D(\A, m)$ by Proposition \ref{prop:contain}. 
Since $\deg\theta_{a, b, c}=\frac{p+q+r-1}{2}$ and 
$\deg\theta_{a, b, c+1}=\frac{p+q+r+1}{2}$ are the exponents 
of $(\A, m)$ (\cite{wak-exp}), it suffices to show that 
these two vector fields are linearly independent over $S$, 
or equivalently, the Saito determinant 
\[
\det
\begin{pmatrix}
\theta_{a, b, c}x_1&
\theta_{a, b, c}x_2\\
\theta_{a, b, c+1}x_1&
\theta_{a, b, c+1}x_2
\end{pmatrix}
\]
is nonzero. The highest degree terms 
with respect to the variable $x_1$ in each entry are 
\[
\begin{pmatrix}
x_1^{a+b+c+1}&
x_1^bx_2^{a+c+1}\\
x_1^{a+b+c+2}&
x_1^bx_2^{a+c+2}
\end{pmatrix}, 
\]
up to nonzero constant factors. 
Then, it is clear that the highest degree 
term in the determinant is 
a non-zero multiple of 
$x_1^{a+2b+c+2}x_2^{a+c+1}$.

(2) 
It is clear that $x_1 \theta_{a, b, c}$ and $x_2 \theta_{a', b', c'}$ are 
contained in $D(\A, m)$ by Proposition \ref{prop:contain}.  
Since $\deg x_1\theta_{a, b, c}=\deg x_2\theta_{a', b', c'}=
\frac{p+q+r}{2}$ are the exponents 
of $(\A, m)$ (\cite{wak-exp}), it suffices to show that 
these two vector fields are linearly independent over $S$, 
equivalently, the Saito determinant 
\[
\det
\begin{pmatrix}
x_1\theta_{a, b, c}x_1&
x_1\theta_{a, b, c}x_2\\
x_2\theta_{a', b', c'}x_1&
x_2\theta_{a', b', c'}x_2
\end{pmatrix}
\]
is nonzero. Note that $(a', b', c')=(a-1, b+1, c)$. 
The highest degree terms 
with respect to the variable $x_1$ in each entry are 
\[
\begin{pmatrix}
x_1^{a+b+c+2}&
x_1^{b+1}x_2^{a+c+1}\\
x_1^{a+b+c+1}x_2&
x_1^{b+1}x_2^{a+c+1}
\end{pmatrix}, 
\]
up to nonzero constant factor. Then, the highest degree 
term in the determinant is 
a non-zero multiple of 
$x_1^{a+2b+c+3}x_2^{a+c+1}$.
\end{proof}

\begin{example}
The following are generic examples. 

(1) 
Let $(\A, m)$ be the multiarrangement defined 
by $x_1^{101}x_2^{115}(x_1-x_2)^{157}$. 
Then the following vector fields 
form a basis of $D(\A, m)$ with exponents $(186, 187)$: 
\[
{\small
\begin{split}
&
\left(\int^{x_1}t^{29}(t-x_1)^{71}(t-x_2)^{85}dt\right)\partial_1+
\left(\int^{x_2}t^{29}(t-x_1)^{71}(t-x_2)^{85}dt\right)\partial_2,\\
&
\left(\int^{x_1}t^{30}(t-x_1)^{71}(t-x_2)^{85}dt\right)\partial_1+
\left(\int^{x_2}t^{30}(t-x_1)^{71}(t-x_2)^{85}dt\right)\partial_2. 
\end{split}
}
\]

(2) 
Let $(\A, m)$ be the multiarrangement defined by 
$x_1^{100}x_2^{115}(x_1-x_2)^{157}$. 
Then the following vector fields 
form a basis of $D(\A, m)$ with exponents $(186, 186)$: 
\[
{\footnotesize 
\begin{split}
&
x_1\left\{
\left(\int^{x_1}t^{28}(t-x_1)^{70}(t-x_2)^{86}dt\right)\partial_1+
\left(\int^{x_2}t^{28}(t-x_1)^{70}(t-x_2)^{86}dt\right)\partial_2
\right\},\\
&
x_2\left\{
\left(\int^{x_1}t^{28}(t-x_1)^{71}(t-x_2)^{85}dt\right)\partial_1+
\left(\int^{x_2}t^{28}(t-x_1)^{71}(t-x_2)^{85}dt\right)\partial_2
\right\}.\\
\end{split}
}
\]
\end{example}

\subsection{A higher-dimensional analogue}

Let $\ell\geq 2$. 
Let $a_1, a_2, \dots, a_\ell, b\in\Z_{\geq 0}$. 
We define the vector field 
$\theta_{a_1, \dots, a_\ell, b}$ as 
\begin{equation}
\theta_{a_1, \dots, a_\ell, b}:=
\sum_{i=1}^\ell
\left(
\int^{x_i}t^b(t-x_1)^{a_1}\dots(t-x_\ell)^{a_\ell}dt
\right)\partial_i. 
\end{equation}
Note that $\deg \theta_{a_1,\dots, a_\ell, b}=\sum_{i=1}^\ell a_i+b+1$. 
By a similar computation to the proof of 
Proposition \ref{prop:contain}, we can easily verify the following. 

\begin{proposition}
\label{prop:higherdimmulti}
Let $(\A, m)$ be the multiarrangement defined by 
\begin{equation}
\label{mbraidl}
Q(\A, m)=\prod_{i=1}^\ell x_i^{a_i+b+1}\cdot
\prod_{1\leq i<j\leq \ell}(x_i-x_j)^{a_i+a_j+1}. 
\end{equation}
Then the vector field $\theta_{a_1, \dots, a_\ell, b}$ is contained in 
$D(\A, m)$. 
\end{proposition}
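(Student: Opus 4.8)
The plan is to verify the defining condition of $D(\A, m)$ directly, namely that $\theta_{a_1,\dots,a_\ell,b}\,\alpha_H \in (\alpha_H^{m(H)})$ for every hyperplane $H$, by exhibiting an explicit factorization after a change of the integration variable, exactly as in the proof of Proposition~\ref{prop:contain}. The arrangement in \eqref{mbraidl} has two types of hyperplanes: the coordinate hyperplanes $x_i=0$ with multiplicity $a_i+b+1$, and the hyperplanes $x_i-x_j=0$ (for $i<j$) with multiplicity $a_i+a_j+1$. Since all the exponents $a_k, b$ are nonnegative, each operator $\int^{x_i}$ has lower limit $0$, so I may treat every expression as an honest definite integral and compute orders of vanishing by elementary substitutions.

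For a coordinate hyperplane I would evaluate $\theta_{a_1,\dots,a_\ell,b}(x_i) = \int_0^{x_i} t^b\prod_{k=1}^\ell (t-x_k)^{a_k}\,dt$ and substitute $t = x_i s$. The factors $t^b$, $(t-x_i)^{a_i}$, and $dt$ then contribute $x_i^b$, $x_i^{a_i}(s-1)^{a_i}$, and $x_i\,ds$ respectively, so that $x_i^{a_i+b+1}$ factors out and the remaining integral over $s\in[0,1]$ is a polynomial in $x_1,\dots,x_\ell$. This shows $\theta_{a_1,\dots,a_\ell,b}(x_i)\in (x_i^{a_i+b+1})$, as required.

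For a hyperplane $x_i-x_j=0$ I would use the telescoping identity $\theta_{a_1,\dots,a_\ell,b}(x_i-x_j) = \int_{x_j}^{x_i} t^b\prod_{k=1}^\ell(t-x_k)^{a_k}\,dt$, which holds because the two primitives share the lower limit $0$. Substituting $t=x_j+s$ turns $(t-x_j)^{a_j}$ into $s^{a_j}$ and $(t-x_i)^{a_i}$ into $(s-(x_i-x_j))^{a_i}$, and a further rescaling $s=(x_i-x_j)u$ with $u\in[0,1]$ extracts the factor $(x_i-x_j)^{a_i+a_j+1}$, leaving a polynomial integral. Hence $\theta_{a_1,\dots,a_\ell,b}(x_i-x_j)\in\bigl((x_i-x_j)^{a_i+a_j+1}\bigr)$.

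There is no serious obstacle here: the statement is a direct multivariate generalization of Proposition~\ref{prop:contain}, and the two substitutions carry over almost verbatim. The only point demanding care is the bookkeeping that the spectator variables $x_k$ (with $k\neq i$ in the first case, $k\neq i,j$ in the second) enter only polynomially after the substitution, so that the residual integrals indeed lie in $S$ rather than in some localization; this is immediate because the integrands are polynomial in the integration variable with coefficients in $S$, and integration over a fixed interval preserves polynomiality.
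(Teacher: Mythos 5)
Your proof is correct and follows exactly the route the paper intends: the paper proves this proposition by the one-line remark that it follows ``by a similar computation to the proof of Proposition~\ref{prop:contain}'', i.e.\ the same telescoping identity $\theta(x_i-x_j)=\int_{x_j}^{x_i}$ and shift/rescaling substitutions that you carry out explicitly. Your write-up simply supplies the details (including the coordinate-hyperplane case, which the paper dismisses with ``other assertions are proved similarly''), so there is nothing to correct.
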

The vector fields $\theta_{a_1, \dots, a_\ell, b}$ 
are useful 
for constructing elements in the logarithmic vector fields. 
The following 
statement gives one of the basic properties. 

\begin{lemma}
\label{lem:indep}
Let $b_1, \dots, b_\ell\in\Z_{\geq 0}$ with 
$b_1<b_2<\dots<b_\ell$. 
Then the $\ell$ vector fields 
$\theta_{a_1, \dots, a_\ell, b_1}, 
\theta_{a_1, \dots, a_\ell, b_2}, \dots, 
\theta_{a_1, \dots, a_\ell, b_\ell}$ 
are linearly independent over $S$. 
\end{lemma}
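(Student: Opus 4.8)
The plan is to prove linear independence via the Saito determinant. Write $P(t)=\prod_{j=1}^\ell(t-x_j)^{a_j}$ and $F_k(x)=\int^x t^{b_k}P(t)\,dt$, so that the $i$-th coefficient of $\theta_{a_1,\dots,a_\ell,b_k}$ is exactly $F_k(x_i)$. The $\ell$ vector fields are linearly independent over $S$ if and only if the coefficient (Saito) determinant $\Delta:=\det\big(F_k(x_i)\big)_{1\le k,i\le\ell}\in S$ is nonzero, so the entire problem reduces to showing $\Delta\neq 0$.

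I would prove $\Delta\neq 0$ by induction on $\ell$, extracting the leading coefficient with respect to the single variable $x_1$ (the same device used in the proof of the theorem above, now organised inductively). The key observation is that $x_1$ enters the entries in two different ways. In the first column ($i=1$) the variable $x_1$ appears both as the integration limit and inside $(t-x_1)^{a_1}$, and a short computation gives $\deg_{x_1}F_k(x_1)=b_k+A+1$, where $A=\sum_j a_j$, with leading coefficient a constant $\beta_k$. In the remaining columns ($i\ge 2$) the variable $x_1$ occurs only through $(t-x_1)^{a_1}$, so $\deg_{x_1}F_k(x_i)=a_1$ with leading coefficient $(-1)^{a_1}\widetilde F_k(x_i)$, where $\widetilde F_k(x)=\int^x t^{b_k}\prod_{j\ge 2}(t-x_j)^{a_j}\,dt$ is the analogous integral in the $\ell-1$ variables $x_2,\dots,x_\ell$.

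I would then expand $\Delta$ along the first column. Since $b_1<\dots<b_\ell$, the $x_1$-degree of the $k$-th term is at most $b_k+A+1+(\ell-1)a_1$, and this maximum is attained only for $k=\ell$. Hence the top $x_1$-degree part of $\Delta$ equals $\beta_\ell$ times a sign times $\det\big(\widetilde F_k(x_i)\big)_{1\le k\le \ell-1,\ 2\le i\le\ell}$. This last determinant is precisely the Saito determinant of the same shape with $\ell$ replaced by $\ell-1$, variables $x_2,\dots,x_\ell$, multiplicities $a_2,\dots,a_\ell$, and exponents $b_1<\dots<b_{\ell-1}$; by the inductive hypothesis it is a nonzero polynomial. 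Therefore, provided $\beta_\ell\neq 0$, the coefficient of $x_1^{\,b_\ell+A+1+(\ell-1)a_1}$ in $\Delta$ is a nonzero polynomial, so $\Delta\neq 0$, which completes the induction (the base case $\ell=1$ being immediate from $\beta_1\neq0$).

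It remains to check that the constant $\beta_k$ is nonzero, and this is the only genuinely arithmetical point. Extracting the top $x_1$-term of $F_k(x_1)$ amounts to replacing $\prod_{j\ge2}(t-x_j)^{a_j}$ by its leading term $t^{A-a_1}$ and substituting $t=x_1u$, which yields $\beta_k=\int_0^1 u^{\,b_k+A-a_1}(u-1)^{a_1}\,du=(-1)^{a_1}B(b_k+A-a_1+1,\,a_1+1)$, a nonzero signed Beta value. I expect the main obstacle to be the bookkeeping: verifying the two leading-coefficient computations and, crucially, confirming that among all permutations only the one sending row $\ell$ to the first column contributes to the top $x_1$-degree, so that no cancellation can occur. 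Once that is settled, the nonvanishing of $\beta_\ell$ together with the inductive hypothesis finishes the argument.
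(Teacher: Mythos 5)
Your proposal is correct and rests on essentially the same mechanism as the paper's proof: both reduce the lemma to the nonvanishing of the Saito determinant $\det\bigl(\theta_{a_1,\dots,a_\ell,b_k}(x_i)\bigr)$ and then exhibit a single dominant term that no other permutation can cancel, with the row of largest exponent $b_\ell$ paired against the $x_1$-column. Your induction on $\ell$ via $x_1$-degrees is precisely the recursive form of the paper's one-shot lexicographic-order argument (in which the product of anti-diagonal entries gives the unique maximal monomial), and your Beta-integral evaluation of the leading constants merely makes explicit the nonzero constant factors that the paper asserts without computation.
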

\begin{proof}
It suffices to show that the determinant of the matrix 
$M=(\theta_{a_1, \dots, a_\ell, b_i}(x_j))_{i, j}$ is nonzero. 
We consider the 
lexicographic order among monomials. Namely, 
$x_1^{p_1}x_2^{p_2}\dots x_\ell^{p_\ell}>x_1^{q_1}x_2^{q_2}\dots 
x_\ell^{q_\ell}$ if and only if $p_1=q_1, q_2=q_2, \dots, p_i=q_i$ and 
$p_{i+1}>q_{i+1}$ for some $i$. The nonzero leading monomial of 
each entry of the Saito matrix $M$ is, up to a constant factor, 
\[
\begin{pmatrix}
x_1^{a_1+\dots+a_\ell+b_1+1}&x_1^{a_1}x_2^{a_2+\dots+a_\ell+b_1+1}&\cdots&x_1^{a_1}\dots x_{\ell-1}^{a_{\ell-1}}x_\ell^{a_{\ell}+b_1+1}\\
x_1^{a_1+\dots+a_\ell+b_2+1}&x_1^{a_1}x_2^{a_2+\dots+a_\ell+b_2+1}&\cdots&x_1^{a_1}\dots x_{\ell-1}^{a_{\ell-1}}x_\ell^{a_{\ell}+b_2+1}\\
\vdots&\vdots&\ddots&\vdots\\
x_1^{a_1+\dots+a_\ell+b_\ell+1}&x_1^{a_1}x_2^{a_2+\dots+a_\ell+b_\ell+1}&\cdots&x_1^{a_1}\dots x_{\ell-1}^{a_{\ell-1}}x_\ell^{a_{\ell}+b_\ell+1}
\end{pmatrix}. 
\]
The product of anti-diagonal entries attains the unique maximum 
monomial 
\[
{
x_1^{\ell a_1+a_2+\dots+a_\ell+b_\ell+1}
x_2^{(\ell-1)a_2+a_3+\dots+a_\ell+b_{\ell-1}+1}
\dots 
x_\ell^{a_\ell+b_1+1}. 
}
\]
Hence $\det M\neq 0$. 
\end{proof}
We can prove the following.

\begin{proposition}
\label{prop:pq}
Let $(\A, m)$ be the multiarrangement defined by $Q(\A, m)$ given by 
\eqref{mbraidl}. 
Then the vector fields 
$\theta_{a_1, \dots, a_\ell, b}$, $\theta_{a_1, \dots, a_\ell, b+1}$, $\dots,$ 
$ \theta_{a_1, \dots, a_\ell, b+\ell-1}$ give a free basis for $(\A, m)$. 
\end{proposition}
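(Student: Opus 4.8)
The plan is to apply the Saito--Ziegler criterion (Proposition \ref{prop:szc}) to the $\ell$ candidate vector fields $\theta_{a_1,\dots,a_\ell,b+k}$ for $k=0,1,\dots,\ell-1$. This criterion reduces the freeness statement to two verifications: membership in $D(\A,m)$, linear independence over $S$, and a matching of the total degree with $\sum_{H\in\A} m(H)$.

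\medskip

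First I would dispose of membership: each $\theta_{a_1,\dots,a_\ell,b+k}$ lies in $D(\A,m)$ by Proposition \ref{prop:higherdimmulti}, since replacing $b$ by $b+k$ does not change the exponents $a_i+b'+1$ in a way that affects the defining equation---wait, it does shift $x_i^{a_i+b+1}$---so here I must be careful. The key observation is that $\theta_{a_1,\dots,a_\ell,b+k}$ naturally lies in $D(\A,m')$ where $m'$ has $x_i$-exponent $a_i+(b+k)+1$, which is \emph{larger} than the target exponent $a_i+b+1$. Since increasing the multiplicity $m$ only shrinks the module $D(\A,m)$, and a derivation satisfying the stronger divisibility condition $\delta\alpha_H\in(\alpha_H^{m'(H)})\subseteq(\alpha_H^{m(H)})$ automatically satisfies the weaker one, each $\theta_{a_1,\dots,a_\ell,b+k}$ indeed belongs to $D(\A,m)$ for the given $m$.

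\medskip

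Second, linear independence over $S$ follows immediately from Lemma \ref{lem:indep}, applied with $(b_1,\dots,b_\ell)=(b,b+1,\dots,b+\ell-1)$, which is a strictly increasing sequence in $\Z_{\geq 0}$. This is the step where the work has already been done: the lemma shows the Saito matrix has a nonvanishing determinant via a unique-leading-monomial argument in the lexicographic order.

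\medskip

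Third, I would verify the degree condition. On one hand $\sum_{k=0}^{\ell-1}\deg\theta_{a_1,\dots,a_\ell,b+k}=\sum_{k=0}^{\ell-1}\bigl(\sum_{i=1}^\ell a_i+b+k+1\bigr)=\ell\bigl(\sum_{i=1}^\ell a_i+b+1\bigr)+\binom{\ell}{2}$. On the other hand, reading the exponents off \eqref{mbraidl} gives $\sum_{H\in\A} m(H)=\sum_{i=1}^\ell(a_i+b+1)+\sum_{1\leq i<j\leq\ell}(a_i+a_j+1)$. I would then check these two totals agree: the braid part contributes $\binom{\ell}{2}$ from the constant $+1$ terms plus $(\ell-1)\sum_i a_i$ since each $a_i$ appears in $\ell-1$ pairs, and combining with the coordinate part $\sum_i a_i+\ell b+\ell$ yields exactly $\ell\sum_i a_i+\ell b+\ell+\binom{\ell}{2}$, matching the degree sum. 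The main obstacle is the potential mismatch in the degree bookkeeping I flagged above: the naive use of Proposition \ref{prop:higherdimmulti} suggests the multiplicity attached to $\theta_{a_1,\dots,a_\ell,b+k}$ grows with $k$, so one must confirm that the \emph{single} multiplicity $m$ from \eqref{mbraidl} (defined with the base value $b$) is the correct common ambient multiplicity for all $\ell$ fields, and that the degree sum balances against this fixed $m$ rather than a $k$-dependent one---the arithmetic above shows it does, with the extra $\binom{\ell}{2}$ from the degree side precisely absorbed.
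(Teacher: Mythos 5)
Your proposal is correct and follows essentially the same route as the paper: membership via Proposition \ref{prop:higherdimmulti}, linear independence via Lemma \ref{lem:indep}, and the degree count feeding into the Saito--Ziegler criterion (Proposition \ref{prop:szc}). Your explicit observation that $\theta_{a_1,\dots,a_\ell,b+k}$ satisfies a \emph{stronger} divisibility condition at the coordinate hyperplanes (so that $D(\A,m')\subseteq D(\A,m)$ for the larger multiplicity $m'$) is a detail the paper leaves implicit, and you resolve it correctly.
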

\begin{proof}
By Proposition \ref{prop:higherdimmulti} and Lemma \ref{lem:indep}, 
these vector  fields 
are contained in $D(\A, m)$ and are linearly independent 
over $S$. 
Since the sum of the degrees of these vector fields is equal to 
$\deg Q(\A, m)=\ell(\sum_{i=1}^\ell a_i + b)+\frac{\ell(\ell+1)}{2}$, 
by Proposition \ref{prop:szc}, they form a basis of $D(\A, m)$. 
\end{proof}

As another consequence we get a free basis of a multi-braid arrangement 
whose freeness is known from \cite{ann}.

\begin{proposition}
\label{multibraid}
Let $(\mathcal{A},m)$ be the multiarrangement defined by
    \begin{equation*}   
Q(\mathcal{A},m)=   \prod_{1 \le i<j \le \ell} (x_i-x_j)^{a_i+a_j+1}. 
\end{equation*}
Then the vector fields $\theta_{a_1, \dots, a_\ell, b}$ for $b=0, \ldots, \ell-2$ together with $E=\sum_{i=1}^\ell \partial_i$ give a free basis for $(\mathcal{A},m)$.
\end{proposition}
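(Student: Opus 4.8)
The plan is to apply the Saito–Ziegler criterion (Proposition \ref{prop:szc}) to the proposed $\ell$ vector fields $\theta_{a_1,\dots,a_\ell,0}, \dots, \theta_{a_1,\dots,a_\ell,\ell-2}$ together with the Euler-type field $E=\sum_{i=1}^\ell\partial_i$. Three things must be checked: containment in $D(\A,m)$, the degree-sum condition, and $S$-linear independence. Note that the multiarrangement here differs from that of Proposition \ref{prop:higherdimmulti} precisely by dropping the coordinate-hyperplane factors $\prod_i x_i^{a_i+b+1}$; only the factors $(x_i-x_j)^{a_i+a_j+1}$ remain.

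First I would verify containment. For each $\theta_{a_1,\dots,a_\ell,b}$ with $0\le b\le\ell-2$, the computation in the proof of Proposition \ref{prop:contain} (specialized via Proposition \ref{prop:higherdimmulti}) shows $\theta_{a_1,\dots,a_\ell,b}(x_i-x_j)$ is divisible by $(x_i-x_j)^{a_i+a_j+1}$, which is exactly the divisibility required here; the absence of the $x_i^{\bullet}$ factors only removes constraints, so containment still holds. For $E$, one checks $E(x_i-x_j)=0$, so $E\in D(\A,m)$ trivially. Second, the degree sum: $\deg E=0$ and $\deg\theta_{a_1,\dots,a_\ell,b}=\sum_{i=1}^\ell a_i+b+1$, so summing over $b=0,\dots,\ell-2$ gives $(\ell-1)(\sum_i a_i+1)+\binom{\ell-1}{2}$, and I would check this equals $\deg Q(\A,m)=\sum_{1\le i<j\le\ell}(a_i+a_j+1)=(\ell-1)\sum_i a_i+\binom{\ell}{2}$ — a short binomial identity.

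The main obstacle is the $S$-linear independence, equivalently the nonvanishing of the Saito determinant of the matrix whose rows are $E$ and the $\theta_{a_1,\dots,a_\ell,b}$ applied to $x_1,\dots,x_\ell$. I cannot directly reuse Lemma \ref{lem:indep}, since that argument uses $\ell$ fields $\theta$ with distinct parameters $b_1<\dots<b_\ell$, whereas here one row is the constant field $E=(1,\dots,1)$. The cleanest route is to adapt the leading-monomial argument of Lemma \ref{lem:indep}: the rows coming from $\theta_{a_1,\dots,a_\ell,b}$ for $b=0,\dots,\ell-2$ contribute leading monomials exactly as in that lemma (now with $b_i=i-1$), while the $E$-row contributes the constant vector $(1,1,\dots,1)$. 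I would expand the determinant along the $E$-row and argue that a single product of leading terms strictly dominates all others in the lexicographic order, guaranteeing no cancellation. Alternatively, and perhaps more transparently, I would relate this determinant to the one in Lemma \ref{lem:indep} by a column-operation / differentiation argument: observe that $\partial_b\,\theta_{a_1,\dots,a_\ell,b}(x_i)$ recovers lower-index data, so that the $\ell\times\ell$ Saito matrix here is, up to triangular column operations, a Wronskian-like variant of the full matrix $M$ treated before. Either way the key point is that replacing the $b=\ell-1$ row of $M$ by $E$ preserves nonvanishing, which I expect to follow from the same dominant-monomial bookkeeping; carrying out the monomial comparison carefully is the one place genuine work is required.
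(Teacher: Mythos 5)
Your proposal is correct and follows essentially the same route as the paper: containment via Proposition \ref{prop:higherdimmulti} (with $E$ trivially in $D(\A,m)$ since $E(x_i-x_j)=0$), the degree count $\sum_{b=0}^{\ell-2}\deg\theta_{a_1,\dots,a_\ell,b}=\deg Q(\A,m)$, a leading-monomial independence argument adapted from Lemma \ref{lem:indep}, and then the Saito--Ziegler criterion. If anything, you are more careful than the paper, which simply asserts independence ``similarly to Lemma \ref{lem:indep}'' without flagging that the constant row $E$ requires redoing the dominant-monomial bookkeeping --- your sketched greedy comparison (each remaining $\theta$-row beats $E$ for the next column, so $E$ is forced into the last column) does close this gap.
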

\begin{proof}
By Proposition \ref{prop:higherdimmulti} the vector fields belong to $D(\mathcal{A},m)$. Similarly to Lemma \ref{lem:indep} they are linearly independent over $S$. We also have $\deg E=0$ and 
$$
\sum_{b=0}^{\ell - 2} \deg \theta_{a_1, \dots, a_\ell, b} 
= \frac{\ell(\ell-1)}{2}+(\ell-1)\sum_{i=1}^\ell a_i =\deg Q(\mathcal{A},m).
$$
The statement follows by Proposition \ref{prop:szc}.
\end{proof}

\begin{remark}
Multiarrangements of 
Propositions \ref{prop:pq} and \ref{multibraid} are connected directly 
as follows. Consider the ring homomorphism $\varphi$ from 
$S=\C[x_1, \dots, x_\ell]$ to $T=\C[y_1, \dots, y_\ell, y_{\ell+1}]$ 
defined by $\varphi(x_i)=y_i-y_{\ell+1}$ ($i=1, \dots, \ell$). Then 
we have 
\[
\varphi
\left(
\prod_{i=1}^\ell x_i^{a_i+a_{\ell+1}+1}\cdot 
\prod_{1\leq i<j\leq\ell}(x_i-x_j)^{a_i+a_j+1}
\right)=
\prod_{1\leq i<j\leq\ell+1}(y_i-y_j)^{a_i+a_j+1}. 
\]
The map $\varphi$ induces a linear map 
$f:\Spec T\simeq\C^{\ell+1}\longrightarrow \Spec S\simeq\C^\ell$ 
with $\Ker f=\C\cdot (\bm{e}_1+\dots+\bm{e}_{\ell+1})$, where vectors  $\bm{e}_i$ form the standard basis of $\C^{\ell+1}$. 
Thus the multiarrangement of Proposition \ref{multibraid} in 
$\C^{\ell+1}$ with $a_{\ell+1}=b$ is obtained from that of Proposition \ref{prop:pq} 
in $\C^\ell$ by taking direct product with one-dimensional space. 
\end{remark}

\section{Integral expressions for reflection multiarrangements}

\label{sec:main}

\subsection{Free reflection multiarrangements for monomial groups}
\label{subsec:freemultiarr}

Let $\ell\geq 2$ and $r\geq 2$. 
Let $V=\bigoplus_{i=1}^\ell\C \bm{e}_i$ be an $\ell$-dimensional 
$\C$-linear space. Let $C_r:=\langle\zeta_r\rangle=
\{\zeta_r^k\mid k=0, \dots, r-1\}$ 
be the multiplicative cyclic group of order $r$, 
where $\zeta_r=e^{2\pi\sqrt{-1}/r}$ is 
the primitive $r$-th root of $1$. 
Let $[\ell]:=\{1, \dots, \ell\}$. 
Let $\varepsilon:[\ell]\longrightarrow C_r$ be a map and 
$\sigma\in\frak{S}_\ell$ be a permutation of $[\ell]$. 
Then, define the linear map 
$\varphi_{\sigma, \varepsilon}: V\longmapsto V$ as 
\begin{equation}
\varphi_{\sigma, \varepsilon}(\bm{e}_i)=
\varepsilon(i)\cdot \bm{e}_{\sigma(i)}. 
\end{equation}
The linear maps $\varphi_{\sigma, \varepsilon}$ 
($\sigma\in\frak{S}_\ell$ and $\varepsilon:[\ell]\longrightarrow C_r$) 
form a subgroup of 
$GL(V)$, denoted by $G(r, 1, \ell)$ and called 
the full monomial group. 
The order of the group is 
$|G(r, 1, \ell)|=r^\ell \cdot\ell !$. 

Now let $p$ be a divisor of $r$. The monomial group 
$G(r, p, \ell)$ is the collection of 
$\varphi_{\sigma, \varepsilon}$ such that 
$\prod_{i=1}^\ell \varepsilon(i)^{r/p}=1$. In other words, 
the product $\prod_{i=1}^\ell \varepsilon(i)$ is contained in 
$\langle\zeta_r^p\rangle=C_{r/p}$. 
Clearly, $|G(r, p, \ell)|=r^\ell \cdot\ell !/p$. 

The hyperplane defined by $x_i-\zeta_r^kx_j=0$ 
($i\neq j, k=0, \dots, r-1$) is a reflecting hyperplane of 
$G(r, p,\ell)$. Furthermore, if $r>p$, 
the coordinate hyperplane $x_i=0$ is also a reflecting hyperplane. 
The defining equation $Q(\A)$ of the arrangement $\A$ of 
reflecting hyperplanes is as follows. 
\begin{equation}
Q(\A)=
\begin{cases}
x_1x_2\dots x_\ell\cdot\prod\limits_{1\leq i<j\leq\ell}(x_i^r-x_j^r), 
&\mbox{ if }r>p, \\
\prod\limits_{1\leq i<j\leq\ell}(x_i^r-x_j^r), &\mbox{ if }r=p. 
\end{cases}
\end{equation}

Let 
$u\geq 1$, and $m_i\geq 0$ 
($i=1, \dots, \ell$). 
We are considering the free multiarrangement $(\A, \mu)$ 
of the form 
\begin{equation}
Q(\A, \mu)=\prod_{i=1}^\ell x_i^{m_i}\cdot
\prod_{1\leq i<j\leq\ell}(x_i^r-x_j^r)^u. 
\end{equation}
Hoge, Mano, R\"ohrle, and Stump \cite{hmrs} proved that 
the following multiarrangements are free. 

\begin{theorem}(\cite[Theorem 4.1]{hmrs}) 
\label{thm:hmrs}
Let $m\in\Z_{\geq 0}$ and $k\in\Z_{\geq -m-1}$ 
and $\overline{m}_i$ $(i=1, \dots,, \ell)$ be integers such that 
$0\leq \overline{m}_i\leq r-1$ and 
\[
m_i:=r(m+k)+1+\overline{m}_i\geq 0, 
\]
for $i=1, \dots, \ell$. 
Note that the quotient $q:=\left\lfloor\frac{m_i-1}{r}\right\rfloor=m+k$ 
does not depend on $i$. Set $a:=(\ell-1)r$, $m':=\sum_{i=1}^\ell m_i$, 
and $c:=ma+qr+1$. 

\begin{itemize}
\item[(1)] (The case $u=2m+1$ is odd.) 
Let $(\A, \mu)$ be the multiarrangement defined by 
\begin{equation}
\label{Qmu}
Q(\A, \mu)=\prod_{i=1}^\ell x_i^{m_i}\cdot
\prod_{1\leq i<j\leq\ell}(x_i^r-x_j^r)^{2m+1}. 
\end{equation}
Then $(\A, \mu)$ is free with exponents 
\begin{equation}
\begin{split}
\exp(\A, \mu)=&
\big(c+m'-\ell(qr+1),c+r,c+2r,\dots,c+(\ell-1)r\big) \\
=&\big(mr\ell+rk+1+\sum \overline{m_i},\\
  &r(k+1)+mr\ell+1,r(k+2)+mr\ell+1,\dots,r(k+\ell-1)+mr\ell+1\big).
\end{split}
\end{equation}

\item[(2)] (The case $u=2m$ is even.) 
Let $(\A, \mu')$ be the multiarrangement defined by 
\begin{equation}
\label{Qmuprime}
Q(\A, \mu')=\prod_{i=1}^\ell x_i^{m_i}\cdot
\prod_{1\leq i<j\leq\ell}(x_i^r-x_j^r)^{2m}. 
\end{equation}
Then $(\A, \mu')$ is free with exponents 
\begin{equation}
\exp(\A, \mu')=
(ma+m_1, \dots, ma+m_\ell). 
\end{equation}
\end{itemize}
\end{theorem}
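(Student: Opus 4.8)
The plan is to deduce the theorem from the Saito--Ziegler criterion (Proposition \ref{prop:szc}): in each case I would exhibit $\ell$ homogeneous vector fields in $D(\A,\mu)$ (resp. $D(\A,\mu')$), show they are $S$-linearly independent, and check that their degrees are the claimed exponents. The degree bookkeeping is routine and can be done at the outset: using $a=(\ell-1)r$, $c=ma+qr+1$ and $m'=\sum_i m_i$, the sum of the listed exponents equals $\sum_i m_i+u\,r\binom{\ell}{2}=\deg Q$ in both parts. So the substance is membership in the multiderivation module and linear independence, and I would treat the odd case $u=2m+1$ first.

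For the construction I would use reflection-adapted integral fields, obtained from the $\theta_{a_1,\dots,a_\ell,b}$ of Proposition \ref{prop:higherdimmulti} by replacing each linear factor $(t-x_j)$ with $(t^r-x_j^r)$:
\[
\Theta_{s,b}:=\sum_{i=1}^\ell\left(\int^{x_i}t^{\,b}\prod_{j=1}^\ell(t^r-x_j^r)^s\,dt\right)\partial_i .
\]
Membership $\Theta_{s,b}\in D(\A,\mu)$ I would verify hyperplane by hyperplane, exactly as in Proposition \ref{prop:contain}. At a coordinate hyperplane $x_i=0$ the substitution $t=x_i\tau$ shows the $i$-th component vanishes to order $b+rs+1$. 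At a reflection hyperplane $x_i-\zeta_r^k x_j=0$ the key is that the integrand is $C_r$-invariant precisely when $r\mid b$, since $t\mapsto\zeta_r t$ multiplies $t^b$ by $\zeta_r^b$ while fixing every $t^r-x_l^r$; invariance lets me rewrite $\Theta_{s,b}(x_i)-\zeta_r^k\Theta_{s,b}(x_j)$ as the single definite integral $\int_{\zeta_r^k x_j}^{x_i}$, and the local substitution $t=\zeta_r^k x_j+s'$ (using $x_i^r=x_j^r$ on the hyperplane, so both $t^r-x_i^r$ and $t^r-x_j^r$ vanish to first order) gives divisibility by $(x_i-\zeta_r^k x_j)^{2s+1}$. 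Taking $s=m$ produces exactly the reflection order $u=2m+1$.

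This already handles $\ell-1$ of the exponents in part (1) cheaply. The fields $\Theta_{m,\,r(k+j)}$ for $j=1,\dots,\ell-1$ have $r\mid b$, degree $c+jr$, and their natural coordinate order $r(q+j)+1$ strictly exceeds every $m_i=rq+1+\overline m_i$; hence they automatically lie in $D(\A,\mu)$, meeting the reflection order exactly and over-satisfying the coordinate conditions. Linear independence over $S$ I would obtain by a leading-monomial computation of the Saito determinant in the lexicographic order, just as in Lemma \ref{lem:indep} and the proof of the three-line theorem: shifting $b$ by multiples of $r$ creates an antidiagonal of distinct maximal monomials.

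The main obstacle is the single remaining field, of degree $mr\ell+rk+1+\sum_i\overline m_i$, which must encode the non-uniform remainders $\overline m_i$. Here the clean mechanism above breaks down: to realize this degree with an integrand of the form $t^b\prod_j(t^r-x_j^r)^m$ one needs $b=rk+\sum_i\overline m_i$, but reflection divisibility required $C_r$-invariance of the integrand, and any $C_r$-invariant prefactor $P(t)=\widetilde P(t^r)$ has degree divisible by $r$. So unless $r\mid\sum_i\overline m_i$ the endpoint-combination trick fails, and this special field must be built by a genuinely different, non-invariant, per-coordinate construction (the $i$-independence of $q=\lfloor(m_i-1)/r\rfloor$ being the structural fact that makes such a field exist). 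The even case $u=2m$ of part (2) is analogous but more delicate still: every one of the $\ell$ exponents $ma+m_i=mr\ell+rk+1+\overline m_i$ now carries an individual remainder, and the even reflection order forces one to multiply the integral fields by the coordinates $x_i$, exactly as the two-dimensional even case used $x_1\theta_{a,b,c}$ and $x_2\theta_{a',b',c'}$, with the reflection-order bookkeeping adjusted to land on $2m$ rather than $2m+1$. I expect these two points — the non-uniform remainders and the even parity — to be where essentially all the work concentrates, while the membership and independence arguments are direct generalizations of those already established in \S\ref{sec:3lines}.
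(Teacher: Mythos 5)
Your framework---Saito--Ziegler plus $C_r$-invariant integral fields---is exactly how the paper proceeds (Theorems \ref{thm:odd} and \ref{thm:even} reprove the cited result of \cite{hmrs} this way), and your fields $\Theta_{m,r(k+j)}$, $j=1,\dots,\ell-1$, are precisely the paper's $\eta_{k+1}^m,\dots,\eta_{k+\ell-1}^m$. But the proposal stops exactly where the content lies, and your diagnosis of the obstruction is incorrect. For the one remaining field in part (1), no ``genuinely different, non-invariant, per-coordinate construction'' is needed: since $D(\A,\mu)$ is an $S$-module, one keeps the $C_r$-invariant integrand $t^{rk}\lambda(t)^m$ and multiplies the resulting \emph{vector field} by the monomial $\prod_{i=1}^\ell x_i^{\overline{m}_i}$. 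The paper's field is $\bigl(\prod_i x_i^{\overline{m}_i}\bigr)\eta_k^m$: the prefactor leaves every reflection-hyperplane divisibility intact, raises the order at $x_j=0$ from $r(m+k)+1$ to exactly $m_j$, and raises the degree to $mr\ell+rk+1+\sum_i\overline{m}_i$. You conflated the degree of the integrand with the degree of the field; the remainders $\overline{m}_i$ never have to enter the integrand at all. (Two further points you would still need: when $k=-m-1$ the field $\eta_k^m$ alone has poles and only the product is polynomial, which is Proposition \ref{prop:poly}(2); and $S$-linear independence survives the prefactor because the Saito determinant is merely multiplied by a nonzero monomial.)

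Part (2) is a genuine gap as well, and your guessed mechanism is off target. The even reflection order $2m$ is not produced by coordinate prefactors (which, again, only adjust coordinate orders and degrees); it is produced by an index-dependent \emph{asymmetric integrand}: the paper's $\sigma_i^m$ integrates $t^{r(k+1)}\lambda(t)^{m-1}\lambda_i(t)$ with $\lambda_i(t)=\lambda(t)/(t^r-x_i^r)$. This integrand is still $C_r$-invariant, so your endpoint trick applies verbatim; at a hyperplane $x_j-\zeta x_n=0$ with $i\in\{j,n\}$ the integrand vanishes to order $m-1$ at one endpoint and $m$ at the other, so the definite integral is divisible by $(x_j-\zeta x_n)^{2m}$, while for $i\notin\{j,n\}$ one gets order $2m+1\geq 2m$. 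The basis is $x_i^{\overline{m}_i}\sigma_i^m$, $i=1,\dots,\ell$; this, not multiplication by $x_i$ alone, is the correct analogue of the asymmetric exponents $(a,b,c)\neq(a',b',c')$ in the two-dimensional even case. Finally, independence here is not a ``direct generalization'' of Lemma \ref{lem:indep}: all $\ell$ fields share the same $t$-power $r(k+1)$, so the paper needs the extra nonvanishing statement (\ref{eq:nonzero}), $\int^x t^{rn}(t^r-x^r)^m dt\neq 0$, and a finer leading-monomial analysis of the Saito matrix. These three items---the monomial-prefactor construction, the $\lambda^{m-1}\lambda_i$ integrand, and the nonvanishing lemma---are the substance of the proof, and the proposal supplies none of them.
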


\begin{remark}
If $m_i=0$ then $k=-m-1$ and $\overline{m}_i=r-1$. 
If $m_i=r(m+k)+1$ then we have $k\geq -m$ and $\overline{m}_i=0$. 
\end{remark}

\subsection{Integral expression for $(\A, \mu)$}
\label{subsec:mu}

In this section, we provide integral expressions for the 
basis of free multiarrangements in Theorem \ref{thm:hmrs} (1). 
Let $\lambda(t)=\prod_{i=1}^\ell(t^r-x_i^r)$. 
For $m\in\Z_{\geq 0}$ and $u\in\Z$, 
define the vector field $\eta_u^m$ as 
\begin{equation}
\eta_u^m=
\sum_{i=1}^\ell
\left(
\int^{x_i} t^{ru}\lambda(t)^mdt
\right)\partial_i. 
\end{equation}
Note that $\eta_u^m$ may have poles in general. 
However, it does not have poles for the following cases. 
\begin{proposition}
\label{prop:poly}
Let $\ell, m, k, r, \overline{m}_i$ be as above. 
\begin{itemize}
\item[(1)] 
If $u\geq -m$, then $\eta_u^m$ is a polynomial vector field. 
\item[(2)] 
The vector field 
$\left(\prod_{i=1}^\ell x_i^{\overline{m}_i}\right)\eta_k^m$ 
is a polynomial vector field. 
\end{itemize}
\end{proposition}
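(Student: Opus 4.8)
The plan is to determine exactly which powers of $x_i$ can occur in the $i$-th component
\[
F_i := \int^{x_i} t^{ru}\lambda(t)^m\,dt
\]
of $\eta_u^m$. The key device is to isolate the $x_i$-dependence hidden inside $\lambda(t)^m$ by factoring
\[
\lambda(t)^m = (t^r-x_i^r)^m\cdot P_i(t),\qquad P_i(t):=\prod_{k\neq i}(t^r-x_k^r)^m,
\]
where the coefficients of $P_i(t)$, as a polynomial in $t$, do not involve $x_i$. Expanding $(t^r-x_i^r)^m$ by the binomial theorem and writing $P_i(t)=\sum_{b\ge 0}d_b\,t^{rb}$ with $d_b\in\C[x_k:k\neq i]$ and $d_0\neq 0$, I would apply the operator $\int^{x_i}$ termwise. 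Since every exponent of $t$ arising here is a multiple of $r\ge 2$, the excluded value $k=-1$ never occurs and each integration is unambiguous.

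Performing the integration, the decisive observation is a cancellation: the monomial coming from the $a$-th binomial term and the $b$-th coefficient of $P_i$ carries the $x_i$-exponent
\[
r(m-a)+\big(r(u+a+b)+1\big)=r(m+u+b)+1,
\]
which is \emph{independent of} $a$. Hence
\[
F_i=\sum_{b\ge 0}\Big(d_b\sum_{a=0}^{m}\tfrac{\binom{m}{a}(-1)^{m-a}}{r(u+a+b)+1}\Big)\,x_i^{\,r(m+u+b)+1},
\]
a Laurent expansion in $x_i$ whose coefficients are polynomials in the remaining variables. In particular $F_i$ has no poles in any $x_k$ with $k\neq i$, and because $b\ge 0$ every $x_i$-exponent is at least $r(m+u)+1$.

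For part (1), the hypothesis $u\ge -m$ gives $r(m+u)+1\ge 1>0$, so all $x_i$-exponents are positive and each $F_i$—hence $\eta_u^m$—is a polynomial vector field. For part (2) I set $u=k$: the lowest $x_i$-exponent of $F_i$ is $r(m+k)+1$, so $x_i^{\overline{m}_i}F_i$ has lowest exponent $r(m+k)+1+\overline{m}_i=m_i\ge 0$ and is therefore polynomial. Writing $\big(\prod_j x_j^{\overline{m}_j}\big)F_i=\big(\prod_{j\neq i}x_j^{\overline{m}_j}\big)\cdot\big(x_i^{\overline{m}_i}F_i\big)$ exhibits each component of $\big(\prod_j x_j^{\overline{m}_j}\big)\eta_k^m$ as a product of polynomials, and summing over $i$ concludes the argument.

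The only genuine obstacle is the bookkeeping of $x_i$-degrees. At first glance the constant-in-$t$ term of $\lambda(t)^m$ (with coefficient a nonzero multiple of $\prod_k x_k^{rm}$) appears to produce a pole $x_i^{ru+1}$, suggesting the false threshold $u\ge 0$. The factorization above dissolves this apparent difficulty by exposing the hidden factor $x_i^{rm}$ carried by that coefficient, which moves the true lowest exponent from $ru+1$ to $r(m+u)+1$; getting this $a$-independent cancellation right is the whole point, after which both parts are immediate.
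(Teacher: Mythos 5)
Your proof is correct and takes essentially the same route as the paper: both isolate the $x_i$-dependence of $\lambda(t)^m$ (the paper via $\lambda(t)=A\,t^r+B\,x_1^r$, you via factoring out $(t^r-x_i^r)^m$), integrate termwise, and conclude from the same lower bound $r(m+u)+1$ on the $x_i$-exponent. The only minor differences are that you record the exact Laurent expansion (making the $a$-independence of the exponent explicit) and treat part (2) uniformly through $m_i\geq 0$, where the paper splits into the cases $k\geq -m$ and $k=-m-1$; the substance is identical.
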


\begin{proof}
(1) 
We will see $\eta_u^m x_1=\int^{x_1}t^{ru}\lambda(t)^mdt$ is 
a polynomial. Note that $\lambda(t)$ can be expressed as 
\[
\lambda(t)=A\cdot t^r + B\cdot x_1^r, 
\]
where $A$ and $B$ are polynomials in $t, x_i$. Therefore, 
$t^{ru}\cdot\lambda(t)^m$ has the form 
\[
t^{ru}\cdot\lambda(t)^m=
\sum_{i+j=m}A_i \cdot x_1^{ri}\cdot t^{r(j+u)}, 
\]
where $A_i$ is a polynomial. The degree of $x_1$ in 
$\int^{x_1}t^{ru}\lambda(t)^mdt$ is at least 
$ri+r(j+u)+1= r(m+u)+1\geq 1$ (recall $u\geq -m$). 
Therefore, it is a polynomial. 

(2) 
If $k\geq -m$, the assertion follows from (1). 
Now we consider the case $k=-m-1$. In this case, 
$m_i=0$ and $\overline{m}_i=r-1$ for all $i=1, \dots, \ell$. 
The degree of $x_1$ in 
$\prod_{i=1}^\ell x_i^{\overline{m}_i}\cdot\eta_k^m x_1$ is at least 
\[
\overline{m}_1+r(m+k)+1=0. 
\]
\end{proof}

\begin{theorem}
\label{thm:odd}
Let $\ell, m, k, r, \overline{m}_i$ be as above. 
Then the vector fields 
\begin{equation}
\label{eq:oddbasis}
\left(\prod_{i=1}^\ell x_i^{\overline{m}_i}\right) \eta_k^m, \quad
\eta_{k+1}^m,  \quad \eta_{k+2}^m,  \quad \dots,  \quad  
\eta_{k+\ell-1}^m
\end{equation}
form a basis of $D(\A, \mu)$. 
\end{theorem}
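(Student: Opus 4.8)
The plan is to apply the Saito--Ziegler criterion (Proposition~\ref{prop:szc}), which reduces the claim to three verifications: that each of the $\ell$ vector fields in \eqref{eq:oddbasis} lies in $D(\A,\mu)$, that they are linearly independent over $S$, and that the sum of their degrees equals $\deg Q(\A,\mu)$. The polynomiality of the entries is already guaranteed by Proposition~\ref{prop:poly}: part~(2) handles the first field $\left(\prod_i x_i^{\overline{m}_i}\right)\eta_k^m$, and part~(1) handles $\eta_{k+1}^m,\dots,\eta_{k+\ell-1}^m$ since each index $k+s$ with $s\geq 1$ satisfies $k+s\geq -m$.

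First I would establish membership in $D(\A,\mu)$. For $\eta_u^m$ one checks the divisibility conditions against each defining factor of \eqref{Qmu}. For the factors $(x_i^r-x_j^r)^{2m+1}$, I would mimic the computation in Proposition~\ref{prop:contain}: writing $\eta_u^m(x_i^r-x_j^r)=\int_{x_j}^{x_i}\!r t^{r-1}\cdot t^{ru}\lambda(t)^m\,dt$ (after reconciling the two antiderivative contributions) and substituting $t=x_j+s$, one sees the integrand carries a factor $(t^r-x_i^r)^m(t^r-x_j^r)^m$, and the integral over $[x_j,x_i]$ supplies one further order of vanishing, giving divisibility by $(x_i^r-x_j^r)^{2m+1}$. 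For the coordinate factors $x_i^{m_i}$, I would verify that $\eta_u^m x_i=\int^{x_i} t^{ru}\lambda(t)^m\,dt$ vanishes to the correct order at $x_i=0$; the leading power of $x_i$ coming from integrating $t^{ru}\lambda(t)^m$ is $r(m+u)+1$, which for $u=k$ (after multiplying by $\prod_i x_i^{\overline{m}_i}$) and for $u=k+s$ meets or exceeds $m_i=r(m+k)+1+\overline{m}_i$. This degree bookkeeping, tracking the $\overline{m}_i$ shifts across all factors simultaneously, is the step requiring the most care.

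Next, linear independence over $S$ follows from Lemma~\ref{lem:indep}, or from a minor variant of its proof: the fields $\eta_{k}^m,\eta_{k+1}^m,\dots,\eta_{k+\ell-1}^m$ are of the form $\theta$ with strictly increasing ``$b$''-type exponents $ru$, so their Saito matrix has a nonvanishing leading term by the same anti-diagonal/lexicographic argument, and multiplying the first column by the nonzero monomial $\prod_i x_i^{\overline{m}_i}$ only scales the determinant by that monomial, preserving nonvanishing. Finally I would check the degree sum. We have $\deg\eta_{k+s}^m=r(m+k+s)+1$ and $\deg\left(\prod_i x_i^{\overline{m}_i}\right)\eta_k^m=\sum_i\overline{m}_i+r(m+k)+1$; summing over $s=0,\dots,\ell-1$ and comparing with the exponents $\exp(\A,\mu)$ recorded in Theorem~\ref{thm:hmrs}(1), the total equals $\deg Q(\A,\mu)=m'+u\binom{\ell}{2}r=\sum_i m_i+(2m+1)r\binom{\ell}{2}$. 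With membership, independence, and the degree identity in hand, Proposition~\ref{prop:szc} yields that \eqref{eq:oddbasis} is a basis. I expect the main obstacle to be the membership verification for the coordinate hyperplanes in the boundary case $k=-m-1$, where $m_i=0$ and the multiplication by $\prod_i x_i^{\overline{m}_i}=\prod_i x_i^{r-1}$ is exactly what rescues polynomiality and the correct vanishing order, so the two conditions must be balanced against each other rather than checked independently.
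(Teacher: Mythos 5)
Your overall skeleton --- Saito--Ziegler criterion, polynomiality from Proposition \ref{prop:poly}, linear independence via a variant of Lemma \ref{lem:indep}, and a degree count against Theorem \ref{thm:hmrs}(1) --- matches the paper's proof. But the core step, membership of $\eta_u^m$ in $D(\A,\mu)$ along the hyperplanes $x_i-\zeta x_j=0$ with $\zeta\in C_r$, is where your argument breaks down, in two ways. First, the identity you propose is false: since $\eta_u^m=\sum_i\bigl(\int^{x_i}t^{ru}\lambda(t)^m dt\bigr)\partial_i$, one has $\eta_u^m(x_i^r-x_j^r)=rx_i^{r-1}\int^{x_i}t^{ru}\lambda(t)^m dt - rx_j^{r-1}\int^{x_j}t^{ru}\lambda(t)^m dt$; the coefficients $x_i^{r-1}$, $x_j^{r-1}$ sit outside the integrals, and this expression cannot be ``reconciled'' into $\int_{x_j}^{x_i}rt^{r-1}t^{ru}\lambda(t)^m dt$ once $m\geq 1$. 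Second, even granting some such formula, the membership condition is per hyperplane: one must show $\eta_u^m(x_i-\zeta x_j)$ is divisible by $(x_i-\zeta x_j)^{2m+1}$ for each of the $r$ roots $\zeta\in C_r$ separately; a single divisibility statement for the bundled factor $(x_i^r-x_j^r)^{2m+1}$ is not the definition of $D(\A,\mu)$, and your substitution $t=x_j+s$ at best addresses the hyperplane $x_i=x_j$ (i.e.\ $\zeta=1$), where the integration range $[x_j,x_i]$ collapses. It says nothing about $x_i=\zeta x_j$ with $\zeta\neq 1$, where that range does not collapse at all.

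What is missing is precisely the paper's key computation: because the integrand $t^{ru}\lambda(t)^m$ involves only powers of $t^r$, it is invariant under $t\mapsto \zeta^{-1}t$, whence $\zeta\int^{x_j}t^{ru}\lambda(t)^m dt=\int^{\zeta x_j}t^{ru}\lambda(t)^m dt$ and therefore $\eta_u^m(x_i-\zeta x_j)=\int_{\zeta x_j}^{x_i}t^{ru}\lambda(t)^m dt$. Now both endpoints are zeros of the integrand of order $m$ (as $\lambda(t)^m$ is divisible by $(t-x_i)^m(t-\zeta x_j)^m$), which together with polynomiality yields divisibility by $(x_i-\zeta x_j)^{2m+1}$ for every $\zeta\in C_r$. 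Without this change of variables the $\zeta\neq 1$ hyperplanes are untouched, so the proposal has a genuine gap. A smaller, fixable error: your degree formula $\deg\eta_{k+s}^m=r(m+k+s)+1$ omits a factor of $\ell$; the correct value is $r(k+s)+rm\ell+1$, since the integrand has degree $r(k+s)+rm\ell$ in $(t,x)$ jointly. With your degrees the total would fall short of $\deg Q(\A,\mu)=\sum_i m_i+r(2m+1)\binom{\ell}{2}$ by $2mr\binom{\ell}{2}$, so the Saito--Ziegler count would not close; with the correct degrees it matches exactly.
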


\begin{proof}
By Proposition \ref{prop:poly}, these are polynomial vector fields. 
It is easily seen that 
\[
\eta_u^m x_j=\int^{x_j}t^{ru}\lambda(t)^m dt
\]
is divisible by $x_j^{ru+rm+1}$. When $u\geq k+1$, 
we have 
\[
ru+rm+1\geq r(m+k+1)+1>r(m+k)+\overline{m}_j+1=m_j. 
\]
Similarly, 
$\left(\prod_{i=1}^\ell x_i^{\overline{m}_i}\right) \eta_k^m x_j$ is 
also divisible by $x_j^{\overline{m}_j+rk+rm+1}=x_j^{m_j}$. 

Next we compute the multiplicity of 
$\eta_u^m(x_i-\zeta x_j)$ for $\zeta\in C_r$. 
Since the integrand is invariant under the 
transformation $t=\zeta^{-1}t'$, we have 
\begin{equation}
\begin{split}
\eta_u^m(x_i-\zeta x_j)
&=
\int^{x_i}t^{ru}\lambda(t)^m dt-
\zeta\int^{x_j}t^{ru}\lambda(t)^m dt\\
&=
\int^{x_i}t^{ru}\lambda(t)^m dt-
\zeta\int^{\zeta x_j}(t')^{ru}\lambda(t')^m \zeta^{-1}dt'\\
&=
\int_{\zeta x_j}^{x_i}t^{ru}\lambda(t)^m dt. 
\end{split}
\end{equation}
Since $\lambda(t)^m$ is divisible by $(t-x_i)^m(t-\zeta x_j)^m$, and $\eta_u^m(x_i-\zeta x_j)$ is a polynomial, it follows that $\eta_u^m(x_i-\zeta x_j)$ is divisible by $(x_i-\zeta x_j)^{2m+1}$.

We can prove the linear independence of 
$\eta_k^m, \eta_{k+1}^m, \dots, \eta_{k+\ell-1}^m$ 
in a similar way to Lemma \ref{lem:indep}. 
It is also straightforward that degrees of these vector fields 
are exactly equal to the exponents $\exp(\A, \mu)$ in 
Theorem \ref{thm:hmrs} (1). Hence, by Saito-Ziegler criterion, 
the vector fields (\ref{eq:oddbasis}) 
form a free basis of $D(\A, \mu)$. 
\end{proof}

\subsection{Integral expression for $(\A, \mu')$}
\label{subsec:muprime}

In this section, we provide integral expressions for the 
basis of free multiarrangements in Theorem \ref{thm:hmrs} (2). 
Let $\lambda_i(t)=\frac{\lambda(t)}{t^r-x_i^r}$. 

For $m\in\Z_{>0}$ and $1\leq i\leq\ell$, 
define the vector field $\sigma_i^m$ as 
\begin{equation}
\sigma_i^m=\sum_{j=1}^\ell
\left(
\int^{x_j}
t^{r(k+1)}\lambda(t)^{m-1}\lambda_i(t)dt
\right)
\partial_j. 
\end{equation}

\begin{theorem}
\label{thm:even}
Let $\ell, m, k, r, \overline{m}_i$ be as above. 
Then the vector fields 
\begin{equation}
\label{eq:evenbasis}
x_1^{\overline{m}_1}\sigma_1^m,\ 
x_2^{\overline{m}_2}\sigma_2^m,\ \dots, \  
x_\ell^{\overline{m}_\ell}\sigma_\ell^m
\end{equation}
form a basis of $D(\A, \mu')$. 
\end{theorem}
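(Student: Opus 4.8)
The plan is to apply the Saito--Ziegler criterion (Proposition \ref{prop:szc}) exactly as in the proof of Theorem \ref{thm:odd}, so the argument splits into three parts: containment in $D(\A,\mu')$, the correct total degree, and linear independence. For the containment, I would first verify that each $x_i^{\overline{m}_i}\sigma_i^m$ is a genuine polynomial vector field (the analogue of Proposition \ref{prop:poly}), then check the required divisibility along every reflecting hyperplane. The coordinate hyperplanes $x_j=0$ are handled by observing that $\sigma_i^m x_j=\int^{x_j}t^{r(k+1)}\lambda(t)^{m-1}\lambda_i(t)\,dt$ is divisible by a suitable power of $x_j$, coming from the factor $t^{r(k+1)}$ together with the vanishing of $\lambda(t)^{m-1}\lambda_i(t)$ at $t=x_j$; multiplying by $x_i^{\overline{m}_i}$ then upgrades this to the needed exponent $m_j$. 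For the diagonal hyperplanes $x_i-\zeta x_j=0$ with $\zeta\in C_r$, I would reuse the change-of-variables trick $t\mapsto\zeta^{-1}t'$ from Theorem \ref{thm:odd} to rewrite $\sigma_i^m(x_i-\zeta x_j)$ as a single integral $\int_{\zeta x_j}^{x_i}$, and then read off the order of vanishing from the zeros of the integrand. Here the key point is that $\lambda(t)^{m-1}$ supplies $m-1$ factors of each of $(t-x_i)$ and $(t-\zeta x_j)$, while $\lambda_i(t)$ — which omits the $(t^r-x_i^r)$ factor — supplies one extra factor of $(t-\zeta x_j)$ but not of $(t-x_i)$; the resulting integral should be divisible by $(x_i-\zeta x_j)^{2m}$, matching the multiplicity $u=2m$ in \eqref{Qmuprime}.

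Next I would confirm the degree count. By construction $\deg\sigma_i^m = r(k+1)+(m-1)\ell r + (\ell-1)r + 1$, so $\deg\bigl(x_i^{\overline{m}_i}\sigma_i^m\bigr)=\overline{m}_i + r(k+1)+(m-1)\ell r+(\ell-1)r+1$. Using $m_i=r(m+k)+1+\overline{m}_i$ and $a=(\ell-1)r$, this should simplify to $ma+m_i$, so that the multiset of degrees agrees with $\exp(\A,\mu')=(ma+m_1,\dots,ma+m_\ell)$ from Theorem \ref{thm:hmrs}~(2). Their sum then equals $\deg Q(\A,\mu')$ automatically, since Theorem \ref{thm:hmrs} already guarantees the exponents sum to the total multiplicity; alternatively one checks $\sum_i(ma+m_i)=\ell ma+m'$ directly against the degree of \eqref{Qmuprime}.

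The remaining, and I expect the most delicate, step is linear independence over $S$, i.e.\ nonvanishing of the Saito determinant $\det\bigl(x_i^{\overline{m}_i}\sigma_i^m(x_j)\bigr)_{i,j}$. Unlike Lemma \ref{lem:indep}, where the integrands differ only by the single exponent $b_i$ on a common factor, here the $\ell$ rows are built from the \emph{distinct} polynomials $\lambda_i(t)$, each omitting a different diagonal factor, so the leading-monomial bookkeeping is less transparent. My plan is nonetheless to extract, for each entry $\sigma_i^m(x_j)=\int^{x_j}t^{r(k+1)}\lambda(t)^{m-1}\lambda_i(t)\,dt$, its leading monomial under a suitable monomial order (lexicographic, as in Lemma \ref{lem:indep}), accounting for the extra factor $x_i^{\overline{m}_i}$, and to argue that the product of the diagonal (or a specific permutation of) entries attains a strictly maximal monomial in the determinant expansion, forcing it to be nonzero. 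The main obstacle will be showing that this maximal monomial is attained by a \emph{unique} permutation; if a clean leading-term argument proves elusive, I would instead specialize the variables to convenient values (for instance a generic point, or degenerate coordinates that decouple the $\lambda_i$) and compute the determinant there, or compare with the already-known basis of Theorem \ref{thm:hmrs}~(2) to deduce independence. Once independence is established, the Saito--Ziegler criterion immediately yields that \eqref{eq:evenbasis} is a basis of $D(\A,\mu')$.
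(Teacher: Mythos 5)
Your overall architecture --- Saito--Ziegler criterion, containment via the change-of-variables trick $t\mapsto\zeta^{-1}t'$, degree count, then a lexicographic leading-monomial analysis of the Saito determinant --- is exactly the paper's proof, and your containment and degree computations are correct: the vanishing count $(m-1)+m+1=2m$ at the hyperplanes $x_i-\zeta x_j$ involving the distinguished index $i$ is the right one, and $\deg\bigl(x_i^{\overline{m}_i}\sigma_i^m\bigr)=ma+m_i$ checks out. The gap is in the independence step, which you correctly flag as the crux but leave as a plan. Concretely: before you can speak of ``the leading monomial'' of an entry $\sigma_i^m x_j$, you must know that the coefficient of the candidate monomial is nonzero. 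Because $\int^{x_j}$ is a formal primitive (an $\int_\infty^{x_j}$ when exponents are negative, which does occur here since $k+1$ can be negative), these coefficients are alternating sums of the form $\int^{x}t^{rn}(t^r-x^r)^{m'}dt$ with $n\in\Z$ possibly negative, and their nonvanishing is not automatic; this is precisely the lemma \eqref{eq:nonzero} that the paper isolates and proves by reducing to $\int_1^{\zeta_r}t^{rn}(t^r-1)^{m'}dt\neq 0$ via integration by parts and induction. Without this ingredient the leading-term bookkeeping never gets started, and nothing in your proposal supplies it.

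Your worry about uniqueness of the maximizing permutation is legitimate (entries at and below the diagonal in column $j$ do share the same leading monomial), but it is resolved by the structure the paper exhibits: in column $j$ the leading monomials strictly increase down to row $j$ and are constant from row $j$ on, so any permutation other than the identity must select, in some column, an entry strictly above the diagonal, making its product of leading monomials strictly lex-smaller; ties below the diagonal therefore never matter. By contrast, your two fallbacks would not rescue the argument as stated. Theorem \ref{thm:hmrs}~(2) supplies only freeness and the exponents, not an explicit basis to compare with, and membership plus correct degrees does not imply basis without independence --- that implication is exactly what the Saito--Ziegler criterion forbids you to skip. Likewise, ``specializing the variables'' proves nothing until you actually exhibit a point (or a degeneration) where the determinant is visibly nonzero, which is no easier than the leading-term analysis you are trying to avoid.
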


\begin{proof}
First we prove that $x_i^{\overline{m}_i}\sigma_i^m$ is a polynomial 
vector field and $x_i^{\overline{m}_i}\sigma_i^m x_j$ is divisible by 
$x_j^{m_j}$. 
The exponent of $x_n$ in each monomial of 
\[
x_i^{\overline{m}_i}\sigma_i^m x_j=
x_i^{\overline{m}_i}\int^{x_j}
t^{r(k+1)}\lambda(t)^{m-1}\lambda_i(t)dt
\]
is clearly nonnegative if $n\neq j$. Now we consider the 
exponent of $x_j$. We note that $x_i^{\overline{m}_i}\sigma_i^m x_j$ is divisible 
by 
$x_j^{r(k+1)+mr+1}$ if $i\neq j$,  
and it is divisible by 
$x_j^{\overline{m}_i+r(k+1)+r(m-1)+1}$ if $i= j$. 
In each case, the exponent is at least $m_i\geq 0$. 

Next we can prove that 
$x_i^{\overline{m}_i}\sigma_i^m (x_j-\zeta x_n), (\zeta\in C_r)$ is 
divisible by $(x_j-\zeta x_n)^{2m}$ in a similar way to 
Theorem \ref{thm:odd}. We can also easily check that 
the degrees of these vector fields are 
\[
\deg x_i^{\overline{m}_i}=\overline{m}_i+r(k+1)+\ell r(m-1)+
r(\ell -1)+1=m_i+ma, 
\]
which are equal to the exponents in Theorem \ref{thm:hmrs} (2). 

It remains to prove that $\sigma_1^m, \dots, \sigma_\ell^m$ are 
linearly independent over $S$. Let $M=(\sigma_i^m x_j)_{i, j}$. 
To prove that $\det M\neq 0$, we look at the leading term 
of each entry with respect to the lexicographic order. 
To look at the leading term, we need 
\begin{equation}
\label{eq:nonzero}
\int^x t^{r n}(t^r-x^r)^m dt\neq 0, 
\end{equation}
($r\ge 2$, $m\geq 0$ and $n\in\Z$), which is a nonzero multiple of 
$x^{r(n+m)+1}$. Actually, (\ref{eq:nonzero}) is equivalent to 
$\int_1^{\zeta_r} t^{r n}(t^r-1)^m dt\neq 0$, which can be proved 
by using integration by parts and the induction on $m$. 
Then the nonzero leading monomials of entries of $M$ are (up to a constant factors) as follows: 
\[
{\footnotesize 
\begin{pmatrix}
x_1^{rk+\ell rm +1}&
x_1^{r(m-1)}x_2^{r(k+1)+rm(\ell-1)+1}&
x_1^{r(m-1)}x_2^{rm}x_3^{r(k+1)+rm(\ell-2)+1}&
\cdots\\
x_1^{rk+\ell rm +1}&
x_1^{rm}x_2^{rk+rm(\ell-1)+1}&
x_1^{rm}x_2^{r(m-1)}x_3^{r(k+1)+rm(\ell-2)+1}&
\cdots\\
x_1^{rk+\ell rm +1}&x_1^{rm}x_2^{rk+rm(\ell-1)+1}&
x_1^{rm}x_2^{rm}x_3^{rk+rm(\ell-2)+1}&
\cdots\\
x_1^{rk+\ell rm +1}&x_1^{rm}x_2^{rk+rm(\ell-1)+1}&
x_1^{rm}x_2^{rm}x_3^{rk+rm(\ell-2)+1}&
\cdots\\
\vdots&\vdots&\vdots&\ddots
\end{pmatrix}.
}
\]
In $j$-th column, the monomial increases strictly up to 
$j$-th place, and 
it is equal to $x_1^{rm}\ldots x_{j-1}^{rm} x_j^{r k + r m(\ell-j+1)+1}$ at the $j$-th place and below. 
In the 
determinant of $M$, the product of diagonal entries 
produces the unique nonzero leading monomial of 
$\det M$. Therefore $\det M\neq 0$. 
\end{proof}

\subsection{Special cases}
\label{subsec:special}
 
Let us list a few examples illustrating Theorems \ref{thm:odd} and \ref{thm:even}. 
\begin{example}
\label{ex:catalan}
Consider the case of Theorem \ref{thm:odd} when $k=0$ and $\overline{m}_i=0$. 
Then 
the defining equation of the multiarrangement is 
\[
\prod_{i=1}^\ell x_i^{rm+1}\cdot
\prod_{1\leq i<j\leq\ell}(x_i^r-x_j^r)^{2m+1}.
\] 
The vector fields 
\begin{equation}
\eta_u^m=
\sum_{i=1}^\ell
\left(
\int^{x_i} t^{ru}\prod_{j=1}^\ell(t^r-x_j^r)^m
dt
\right)\partial_i, 
\end{equation}
$0\leq u\leq \ell-1$, form a basis. 
\end{example}

\begin{example}
\label{ex:Dcatalan}
Consider the case of Theorem \ref{thm:odd} when 
$k=-m-1$ and $\overline{m}_i=r-1$. 
Then 
the defining equation of the multiarrangement is 
\[
\prod_{1\leq i<j\leq\ell}(x_i^r-x_j^r)^{2m+1}.
\] 
The vector fields 
\begin{equation*}
(x_1\dots x_\ell)^{r-1}\eta_{-m-1}^m =
(x_1\dots x_\ell)^{r-1}
\sum_{i=1}^\ell
\left(
\int^{x_i} t^{-r(m+1)}\prod_{j=1}^\ell(t^r-x_j^r)^m
dt
\right)\partial_i, 
\end{equation*}
and
\begin{equation*}
\eta_u^m =
\sum_{i=1}^\ell
\left(
\int^{x_i} t^{ru}\prod_{j=1}^\ell(t^r-x_j^r)^m
dt
\right)\partial_i, 
\qquad (-m\leq u\leq -m+\ell-2), 
\end{equation*}
form a basis. 
\end{example}

\begin{example}
\label{ex:typeB}
(Coxeter arrangement of type $B_\ell$) 
Let $r=2$ and $m\geq 0$. 

(1) Let $m_i=2m+1$, hence, $k=0$, $\overline{m}_i=0$ 
($i=1, \dots, \ell$). Then the defining equation 
\eqref{Qmu} takes the form 
\[
\prod_{i=1}^\ell x_i^{2m+1}\cdot
\prod_{1\leq i<j\leq\ell}(x_i^2-x_j^2)^{2m+1}. 
\]
The vector fields $\eta_0^m, \eta_1^m, \dots, \eta_{\ell-1}^m$, 
or, 
more explicitly, 
\[
\sum_{i=1}^\ell\left(
\int_0^{x_i}t^{2u}
\prod_{j=1}^\ell(t^2-x_j^2)^m\right)\partial_i, 
\]
($u=0, 1, \dots, \ell-1$) form a basis of $D(B_\ell, 2m+1)$. 

(2) Let $m_i=2m$, hence, $k=-1$, $\overline{m}_i=1$ 
($i=1, \dots, \ell$). 
Then the defining equation 
\eqref{Qmuprime} takes the form 
\[
\prod_{i=1}^\ell x_i^{2m}\cdot
\prod_{1\leq i<j\leq\ell}(x_i^2-x_j^2)^{2m}. 
\]
The vector fields 
$x_1\sigma_1^m, x_2\sigma_2^m, \dots, x_\ell\sigma_\ell^m$, 
or, 
more explicitly, 
\[
x_i\cdot\sum_{j=1}^\ell
\left(
\int_0^{x_j}
\frac{1}{t^2-x_i^2}
\prod_{k=1}^\ell(t^2-x_k^2)^m\right)\partial_j, 
\]
($i=1, \dots, \ell$) form a basis of $D(B_\ell, 2m)$. 
\end{example}

\begin{example}
\label{ex:typeD}
(Coxeter arrangement of type $D_\ell$) 
Let $r=2$ and $m\geq 0$. 

(1) Let $m_i=0$, hence, $k=-m-1$, $\overline{m}_i=1$ 
($i=1, \dots, \ell$). Then by Theorem \ref{thm:odd}, 
the vector fields 
\[
\begin{split}
\prod_{i=1}^\ell x_i\cdot&\sum_{j=1}^\ell\left(
\int^{x_j}t^{-2(m+1)}\prod_{k=1}^\ell(t^2-x_k^2)^m dt\right)\partial_j, \\
&
\sum_{j=1}^\ell\left(
\int^{x_j}t^{-2u}\prod_{k=1}^\ell(t^2-x_k^2)^m dt\right)\partial_j, 
\end{split}
\]
($u=m, m-1, \dots, m-\ell+2$) form a basis of the multiarrangement 
defined by $\prod_{1\leq i<j\leq\ell}(x_i^2-x_j^2)^{2m+1}$. 

(2) 
Again let $m_i=0$, hence, $k=-m-1$, $\overline{m}_i=1$ 
($i=1, \dots, \ell$). Then by Theorem \ref{thm:even}, 
the vector fields 
\[
x_i\cdot\sum_{j=1}^\ell
\left(
\int^{x_j}\frac{t^{-2m}}{t^2-x_i^2}\prod_{k=1}^\ell (t^2-x_k^2)^m dt
\right)
\partial_j
\]
($i=1, \dots, \ell$), form a basis of the multiarrangement 
defined by $\prod_{1\leq i<j\leq\ell}(x_i^2-x_j^2)^{2m}$. 
\end{example}

\begin{remark}
By the results of \cite{aefy}, Examples \ref{ex:catalan} -- \ref{ex:typeD} and, more generally, Theorems \ref{thm:odd}--\ref{thm:even} also lead to integral formulas of certain quasi-invariants associated with the monomial groups $G(r,p,\ell)$ with $p=1, r$.
\end{remark}

\section{Primitive derivation for well-generated monomial groups}

\label{sec:prim}

\subsection{The invariant rings of monomial groups}

\label{subsec:inv}

Let 
\[
e_i(x_1, \dots, x_\ell)=\sum_{1\leq s_1<\dots <s_i\leq\ell}
x_{s_1}\dots x_{s_i}
\]
be the elementary symmetric polynomial of degree $i$. The basic 
invariants, which are the generators of the invariant ring 
$\C[x_1, \dots, x_\ell]^{G(r, p, \ell)}$, can be explicitly 
described using $e_i$. Namely, let 
\[
\begin{cases}
P_i&:=(-1)^i e_i(x_1^r, \dots, x_\ell^r), \mbox{ for } 1\leq i\leq\ell-1\\
P_\ell&:=(x_1x_2\dots x_\ell)^{r/p}. 
\end{cases}
\]
Note that $P_\ell^p=e_\ell(x_1^r, \dots, x_\ell^r)$. 
In particular, if $p=1$, we have $P_\ell=e_\ell(x_1^r, \dots, x_\ell^r)$. 
We also have 
\begin{equation}
\label{eq:coeff}
\lambda(t)=t^{r\ell}+P_1(x) t^{r(\ell-1)}+\dots+
P_{\ell-1}(x) t^r +(-1)^\ell P_\ell(x)^p. 
\end{equation}
We formally have 
\[
\frac{\partial \lambda(t)}{\partial P_i}\dot{=}
\begin{cases}
t^{r(\ell-i)}, \mbox{ if } 1\leq i\leq \ell-1\\
P_\ell(x)^{p-1}, \mbox{ if } i=\ell. 
\end{cases}
\]
Recall that (\cite{bro}) 
$P_1, \dots, P_\ell$ are algebraically independent, and 
\[
\C[x_1, \dots, x_\ell]^{G(r, p, \ell)}=\C[P_1, \dots, P_\ell]. 
\]
The Jacobian of the basic invariants is 
\begin{equation}
\label{eq:jacobian}
\det\left(\frac{\partial P_i}{\partial x_j}\right)\dot{=}\ 
(x_1 x_2\dots x_\ell)^{\frac{r}{p}-1}\prod_{1\leq i<j\leq\ell}(x_i^r-x_j^r). 
\end{equation}
Later, the basic invariant of the highest degree plays an important role.

\subsection{Order of reflections} 

\begin{definition}
(1) Denote by $\bm{\beta}$ the multiplicity assigning $1$ to each 
coordinate hyperplane $\{x_i=0\}$, 
and assigning 0 to the hyperplanes $\{x_i-\zeta x_j=0\}$, where $\zeta\in C_r$. 

(2) Denote by $\bm{\delta}_r$ the multiplicity assigning $1$ to each 
hyperplane of the form $\{x_i-\zeta x_j=0\}$, where $\zeta\in C_r$, 
and assigning $0$ to each 
coordinate hyperplane $\{x_i=0\}$. 

(3) For the group 
$G(r, p, \ell)$, denote by $\bm{\omega}$ 
the multiplicity assigning the order of the reflection to each 
reflecting hyperplane. More explicitly, $\bm{\omega}$  is 
expressed as follows. 
\[
\bm{\omega}=
\begin{cases}
\frac{r}{p}\bm{\beta}+2\bm{\delta}_r, \mbox{ if } p<r,\\
\\
2\bm{\delta}_r, \mbox{ if } p=r. 
\end{cases}
\]
\end{definition}

\subsection{Primitive derivation $D$}

Let $G=G(r, 1, \ell)$ or $G(r, r, \ell)$. 
Denote the invariant ring by $R:=S^G$. 
Among the vector fields 
$\frac{\partial}{\partial P_1}, \dots, 
\frac{\partial}{\partial P_\ell}$, 
there exists unique one with the 
lowest degree, which we denote by $D$, namely, 
\[
D=
\begin{cases}
\frac{\partial}{\partial P_\ell}, \mbox{ if } p=1,\\
\frac{\partial}{\partial P_{\ell-1}}, \mbox{ if } p=r.
\end{cases}
\]
This vector field is canonically determined up to nonzero 
scalar multiplication, it 
is called the primitive derivation.

\subsection{The action of the primitive derivation}

We will use the notation from Example \ref{ex:catalan}. 
In this section, 
we describe the action of primitive derivation on the 
module $D(\A, m\cdot\bm{\omega}+1)^G$.

Denote by $\nabla$ the integrable connection with flat sections 
$\partial_1, \dots, \partial_\ell$. More explicitly, for polynomial 
vector fields $\delta$ and $\eta=\sum_{i=1}^\ell f_i\partial_i$, we define 
\[
\nabla_\delta\eta=\sum_{i=1}^\ell (\delta f_i)\partial_i. 
\]
The vector fields $\frac{\partial}{\partial P_i}$ can be considered as  
rational vector fields on $V$. For example, 
\begin{equation}
\label{eq:prim}
\frac{\partial}{\partial P_\ell}=
\frac{1}{Q}\det
\begin{pmatrix}
\frac{\partial P_1}{\partial x_1}& 
\frac{\partial P_2}{\partial x_1}& 
\dots &
\frac{\partial P_{\ell-1}}{\partial x_1}& 
\frac{\partial}{\partial x_1}
\\
\frac{\partial P_1}{\partial x_2}& 
\frac{\partial P_2}{\partial x_2}& 
\dots &
\frac{\partial P_{\ell-1}}{\partial x_2}& 
\frac{\partial}{\partial x_2}
\\
\vdots&\vdots&\ddots&\vdots&\vdots
\\
\frac{\partial P_1}{\partial x_\ell}& 
\frac{\partial P_2}{\partial x_\ell}& 
\dots &
\frac{\partial P_{\ell-1}}{\partial x_\ell}& 
\frac{\partial}{\partial x_\ell}
\end{pmatrix}, 
\end{equation}
where $Q=\det(\frac{\partial P_i}{\partial x_j})$ is the 
Jacobian as in (\ref{eq:jacobian}). 

The primitive derivation $\nabla_D$ acts as follows. 
\begin{proposition}
(\cite{hmrs}) 
Let $G$ be a well-generated complex reflection group. Let $T$ be 
the subring of $R=S^G$ defned as 
\[
T=\{P\in S^G\mid DP=0\}. 
\]
The primitive derivation $\nabla_D$ induces an isomorphism of $T$-modules 
\begin{equation}
\nabla_D: D(\A, (m+1)\bm{\omega}+1)^G\stackrel{\simeq}{\longrightarrow} 
D(\A, m\bm{\omega}+1)^G, 
\end{equation}
for $m\geq 0$. It is free $S^G$-module and the basis also spans 
$D(\A, m\bm{\omega}+1)$. 
\end{proposition}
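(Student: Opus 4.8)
The plan is to establish the statement for the monomial groups $G(r,1,\ell)$ and $G(r,r,\ell)$ at hand by computing $\nabla_D$ explicitly on the integral bases of Examples \ref{ex:catalan} and \ref{ex:Dcatalan}. The heart of the matter is the following computation. Write $\eta_u^m=\sum_i f_i\partial_i$ with $f_i=\int^{x_i}t^{ru}\lambda(t)^m\,dt$, and recall from \eqref{eq:coeff} that the coefficients of $\lambda(t)$, viewed as a polynomial in $t$, are exactly $P_1,\dots,P_\ell$. Hence $D$, which is $\partial/\partial P_\ell$ for $p=1$ and $\partial/\partial P_{\ell-1}$ for $p=r$, acts on the integrand through the formal derivatives recorded after \eqref{eq:coeff}, namely $\partial\lambda/\partial P_\ell\,\dot{=}\,1$ (resp.\ $\partial\lambda/\partial P_{\ell-1}\,\dot{=}\,t^r$). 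Differentiating $f_i$ produces, besides this interior term, a boundary contribution from the upper limit proportional to $x_i^{ru}\lambda(x_i)^m$, which vanishes because $x_i$ is a root of $\lambda(t)$ (for $m\ge 1$). Writing $\dot{=}$ for equality up to a nonzero constant, I therefore expect
\[
\nabla_D\eta_u^m\;\dot{=}\;\eta_u^{m-1}\ (p=1),\qquad \nabla_D\eta_u^m\;\dot{=}\;\eta_{u+1}^{m-1}\ (p=r).
\]

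From this the structural properties follow. Because $DP=0$ for $P\in T$ and $\nabla$ obeys the Leibniz rule, $\nabla_D$ is $T$-linear; because $Dg\in S^G$ for $g\in S^G$, and $D(\A,(m+1)\bm{\omega}+1)\subseteq D(\A,m\bm{\omega}+1)$, the map carries the source module into the target, and it preserves $G$-invariance since $D$ is itself $G$-invariant. On generators it sends $\eta_u^{m+1}$ to a nonzero multiple of $\eta_u^m$ (resp.\ $\eta_{u+1}^m$); for $p=r$ the index shift matches the ranges in Example \ref{ex:Dcatalan}, and the distinguished generator carrying the factor $(x_1\cdots x_\ell)^{r-1}=P_\ell^{r-1}$ passes through unharmed because $P_\ell^{r-1}\in T$, landing on the distinguished generator of the target. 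In particular the exponents of source and target differ by $\deg P_*$ (where $P_*$ denotes $P_\ell$, resp.\ $P_{\ell-1}$), which equals $-\deg D$.

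To turn this into a genuine $T$-module isomorphism, note first that both modules are free over $R=S^G$ of rank $\ell$, so their graded Hilbert series agree after the shift by $\deg P_*$; it then suffices to prove surjectivity. For this I would bootstrap on powers of $P_*$: since $DP_*=1$, and using the displayed formula at every level $m+j$, one has (for $p=1$, and analogously with the shift $u\mapsto u+1$ for $p=r$)
\[
\nabla_D\big(P_*^{\,a}\eta_u^{m+1+j}\big)=a\,P_*^{\,a-1}\eta_u^{m+1+j}+c\,P_*^{\,a}\,\eta_u^{m+j},
\]
with a nonzero constant $c$ depending on $m$ and $j$, where $P_*^{\,a}\eta_u^{m+1+j}$ lies in the source module $D(\A,(m+1)\bm{\omega}+1)^G$ for all $j\ge 0$. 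Solving this relation for $P_*^{\,a}\eta_u^{m+j}$ by induction on $a$ (the base case $a=0$ being immediate, and the term $P_*^{\,a-1}\eta_u^{m+1+j}$ handled by the inductive hypothesis with $j$ raised by one) shows that every $R$-basis multiple of the target lies in the image. Surjectivity together with the equality of graded dimensions yields bijectivity in each degree, hence the isomorphism; $T$-linearity is already in hand.

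The step I expect to be the main obstacle is the rigorous justification of the boundary computation: $D$ is a \emph{rational} vector field defined by \eqref{eq:prim}, so one must verify carefully that its action on the polynomial coefficient $f_i$ splits as ``differentiate the $P$-dependence of the integrand'' plus ``boundary term at $t=x_i$,'' and that the latter is annihilated by $\lambda(x_i)^m=0$ without introducing spurious poles. Two further points must be secured before the Hilbert series argument applies: that the invariant modules $D(\A,m\bm{\omega}+1)^G$ are genuinely free over $S^G$ with the stated integral bases (and not merely spanning the ambient $S$-module, as asserted in the final sentence of the statement), and the correct bookkeeping of index ranges and of the distinguished generator in the $p=r$ case, where the coordinate hyperplanes are absent and the vanishing condition must be read along the hyperplanes $x_i-\zeta x_j=0$.
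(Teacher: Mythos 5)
There is a genuine gap, and it is one of scope: the proposition is asserted for \emph{every} well-generated complex reflection group $G$, while your argument only addresses the two monomial families $G(r,1,\ell)$ and $G(r,r,\ell)$. This restriction is not cosmetic. Your entire mechanism — the polynomial $\lambda(t)=\prod_i(t^r-x_i^r)$ whose coefficients are the basic invariants \eqref{eq:coeff}, the integral generators $\eta_u^m$, and the computation of $D$ through the integrand — simply has no analogue for the exceptional well-generated groups (e.g.\ $H_4$, $E_8$, the Shephard groups), for which no integral expressions exist in this paper. Note also that the paper itself does not prove this proposition: it is imported from \cite{hmrs}, where it is established uniformly for all well-generated groups by entirely different means (primitive vector fields, the flat structure coming from the Okubo system \cite{kms}, and the resulting Hodge-type filtration). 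So your proposal cannot be viewed as an alternative proof of the stated result; at best it verifies the statement in the two cases where the paper's integral expressions are available — which is essentially the content of the two subsections \emph{following} the proposition, where the paper records $\nabla_D\eta_u^m\,\dot{=}\,\eta_u^{m-1}$ (for $p=1$) and $\nabla_D\eta_u^m\,\dot{=}\,\eta_{u+1}^{m-1}$ (for $p=r$) as a consistency check against the quoted result.

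Within the monomial cases your outline is largely sound, and the $T$-linear bootstrap on powers of $P_*$ combined with the graded dimension count is a legitimate way to get bijectivity (it is needed, since $\nabla_D$ is only $T$-linear, not $R$-linear, so ``basis maps to basis'' alone does not suffice). Two points to repair. First, your justification of well-definedness is off: the inclusion $D(\A,(m+1)\bm{\omega}+1)\subseteq D(\A,m\bm{\omega}+1)$ does not by itself imply that $\nabla_D$ preserves logarithmic vanishing conditions (that preservation is a nontrivial theorem in general); in your setting it must instead be extracted from the explicit computation, writing every element of the source as a $T$-combination of $P_*^a\eta_u^{m+1}$ and observing that both terms of $\nabla_D\bigl(P_*^a\eta_u^{m+1}\bigr)=aP_*^{a-1}\eta_u^{m+1}+cP_*^a\eta_u^m$ lie in the target. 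Second, your freeness input (that the $\eta$'s form an $S$-basis, hence an $S^G$-basis of the invariant part) should be sourced from Theorem \ref{thm:odd} via the Saito--Ziegler criterion \ref{prop:szc} with the degree sum computed directly, rather than through the exponents of Theorem \ref{thm:hmrs}; this keeps your argument from leaning on \cite{hmrs}, whose proof of freeness already uses the very primitive-derivation isomorphism you are trying to prove.
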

Therefore, the basis of $D(\A, m\bm{\omega}+1)$ can be described by 
using the inverse primitive derivation 
$\nabla_D^{-1}$, which is difficult to describe in general. However, 
for well-generated 
groups $G(r, 1, \ell)$ and $G(r, r, \ell)$, we can describe 
$\nabla_D^{-1}$ in terms of the integral expression.

\subsection{The case $G=G(r, 1, \ell)$}

In this case, $D=\frac{\partial}{\partial P_\ell}$. 
As in Example \ref{ex:catalan}, the vector fields 
\begin{equation}
\label{eq:invbasis}
\eta_0^m, \eta_1^m, \dots, \eta_{\ell-1}^m 
\end{equation}
form a basis of 
$D(\A, m\bm{\omega}+1)$. 
Furthermore, since $\eta_u^m$ is $G$-invariant, 
the basis (\ref{eq:invbasis}) forms the $S^G$-basis of 
$D(\A, m\bm{\omega}+1)^G$. 
By the expression (\ref{eq:coeff}), we have 
\[
D\lambda(t)^m=(-1)^\ell m\lambda(t)^{m-1}. 
\]
Thus we have (see also \cite[Remark 2.5]{suy-yos}) 
\[
\nabla_D^{-m}\eta_k^0\dot{=}\eta_k^m. 
\]

\subsection{The case $G=G(r, r, \ell)$}

In this case, $D=\frac{\partial}{\partial P_{\ell-1}}$. As in Example \ref{ex:Dcatalan}, 
\begin{equation}
\label{eq:Dinvbasis}
P_\ell^{r-1}\eta_{-m-1}^m, 
\eta_{-m}^m, \eta_{-m+1}^m, \dots, \eta_{-m+\ell-2}^m 
\end{equation}
form a basis of $D(\A, m\bm{\omega}+1)$. 
Furthermore, since $\eta_u^m$ is $G$-invariant, 
the basis (\ref{eq:Dinvbasis}) forms the $S^G$-basis of 
$D(\A, m\bm{\omega}+1)^G$. 
By the expression (\ref{eq:coeff}), we have 
\[
D\lambda(t)^m=m t^r \lambda(t)^{m-1}, 
\]
and we also have 
\[
\nabla_D\eta_u^m 
\dot{=} 
\eta_{u+1}^{m-1}.
\]
Thus we have 
\[
\nabla_D^{-m}P_\ell^{r-1}\eta_{-1}^0\dot{=}P_\ell^{r-1}\eta_{-m-1}^m, \ \ 
\nabla_D^{-m}\eta_{u}^0\dot{=}
\eta_{-m+u}^m, (0\leq u\leq\ell-2). 
\]

\section{A conjecture on $B_2$-Catalan arrangement}

\label{sec:conj}

In this section, we propose a conjecture regarding the basis of 
the Catalan arrangement of type $B_2$. 

\begin{definition}
Let $\ell>0$ and $m\geq 0$. 
Define the extended Catalan arrangement $\Cat(B_\ell, m)$ 
of type $B_\ell$ by 
\[
\prod_{i=1}^\ell\prod_{k=-m}^m(x_i-k)\cdot
\prod_{1\leq i<j\leq\ell}\prod_{k=-m}^m
(x_i+x_j-k)(x_i-x_j-k). 
\]
\end{definition}
Note that the extended Catalan arrangement determines 
a central arrangement in $\R^{\ell+1}$, 
the so-called coning \cite{ot} $c\Cat(B_\ell, m)$. By taking 
the Ziegler restriction to the hyperplane at infinity, 
we obtain the multiarrangement defined by 
\[
\prod_{i=1}^\ell x_i^{2m+1}
\cdot
\prod_{1\leq i<j\leq\ell}
(x_i^2-x_j^2)^{2m+1}. 
\]

\begin{figure}[htbp]
\centering
\begin{tikzpicture}[scale=0.8]


\draw[thick, black] (2.0,0) -- +(0, 6); 
\draw[thick, black] (2.5,0) -- +(0, 6); 
\draw[thick, black] (3,0) -- +(0, 6); 
\draw[thick, black] (3.5,0) -- +(0, 6); 
\draw[thick, black] (4,0) -- +(0, 6); 

\draw[thick, black] (0,2.0) -- +(6, 0); 
\draw[thick, black] (0,2.5) -- +(6, 0); 
\draw[thick, black] (0,3) -- +(6, 0); 
\draw[thick, black] (0,3.5) -- +(6, 0); 
\draw[thick, black] (0,4) -- +(6, 0); 

\draw[thick, black] (0,1) -- +(5, 5); 
\draw[thick, black] (0,0.5) -- +(5.5, 5.5); 
\draw[thick, black] (0,0) -- +(6, 6); 
\draw[thick, black] (0.5,0) -- +(5.5, 5.5); 
\draw[thick, black] (1,0) -- +(5, 5); 

\draw[thick, black] (0,5) -- +(5, -5); 
\draw[thick, black] (0,5.5) -- +(5.5, -5.5); 
\draw[thick, black] (0,6) -- +(6, -6); 
\draw[thick, black] (0.5,6) -- +(5.5, -5.5); 
\draw[thick, black] (1,6) -- +(5, -5); 

\draw[thick, black] (7,3) -- +(4,0) node[right] {$5$}; 
\draw[thick, black] (9,1) -- +(0,4) node[above] {$5$}; 
\draw[thick, black] (7,1) -- +(4,4) node[right] {$5$}; 
\draw[thick, black] (7,5) -- +(4,-4) node[right] {$5$}; 


\end{tikzpicture}
\caption{$\Cat(B_2, 2)$ and associated multiarrangement} 
\label{fig:B2cat}
\end{figure}
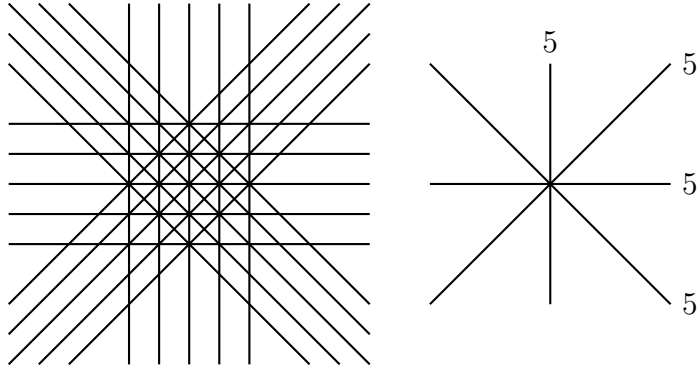
The coning $c\Cat(B_\ell, m)$ is known to be free \cite{yos-char}. 
However, the explicit basis has not been constructed even for 
$B_2$ case. 

Let $m, i\geq 0$. Define the polynomial $f^m_i(x, y)$ to be 
\[
f^m_i(x, y):=
\int_0^x t^{2i}(t^2-x^2)^m(t^2-y^2)^mdt. 
\]

\begin{example}
We have 
\[ 
\begin{split}
f^1_0(x,y)&\dot{=}x^5-5x^3y^2,\\
f^1_1(x,y)&\dot{=}3x^7-7x^5y^2,\\
f^1_2(x,y)&\dot{=}5x^9-9x^7y^2,\\
f^1_3(x,y)&\dot{=}7x^{11}-11x^9y^2,\\
f^2_0(x,y)&\dot{=}x^9-6x^7y^2+21x^5y^4,\\
f^2_1(x,y)&\dot{=}5x^{11}-22x^9y^2+33x^7y^4,\\
f^2_2(x,y)&\dot{=}35x^{13}-130x^{11}y^2+143x^9y^4,\\
f^2_3(x,y)&\dot{=}21x^{15}-70x^{13}y^2+65x^{11}y^4,\\
f^3_0(x, y)&\dot{=}5x^{13}-39x^{11}y^2+143x^9y^4-429x^7y^6,\\
f^3_1(x, y)&\dot{=}7x^{15}-45x^{13}y^2+117x^{11}y^4-143x^9y^6,\\
f^3_2(x, y)&\dot{=}21x^{17}-119x^{15}y^2+255x^{13}y^4-221x^{11}y^6,\\
f^3_3(x, y)&\dot{=}231x^{19}-1197x^{17}y^2+2261x^{15}y^4-1615x^{13}y^6.
\end{split}
\]
\end{example}
The polynomial $f^m_i(x, y)$ has the following form, 
\begin{equation}
\label{eq:fmi}
f^m_i(x, y)=\sum_{k=0}^m c_kx^{4m+2i+1-2k}y^{2k}. 
\end{equation}
We already know that the vector field 
$f^m_i(x, y)\partial_x+f^m_i(y, x)\partial_y$ is contained in 
$D(B_2, 2m+1)$, which is equivalent to 
\[
f^m_i(x, y)\in (x^{2m+1}), \mbox{ and }
f^m_i(x, y)\pm f^m_i(y, x)\in ((x\pm y)^{2m+1}). 
\]

In \cite{suy-yos}, for type $A_\ell$, 
the basis of $D(c\Cat(\A_\ell, m))$ was 
constructed using the discrete analogue of the integral expression. 
However, for type $B_\ell$, the discrete integral does not work. 
We need to define a suitable deformation of the integral expression. 
In order to describe the deformation, we 
require deformed power of functions. 
For a function of one variable $f(x)$, 
define the falling power $f(x)^{\underline{n}}$ and the 
rising power $f(x)^{\overline{n}}$ as 
\[
\begin{split}
f(x)^{\underline{n}}&=f(x)f(x-1)\cdots f(x-n+1),\\
f(x)^{\overline{n}}&=f(x)f(x+1)\cdots f(x+n-1). 
\end{split}
\]
(We may also set $f(x)^{\underline{0}}=f(x)^{\overline{0}}=1$.) 

\begin{definition}
For a homogeneous polynomial of the form 
\[
f(x, y)=
\sum_{k=0}^m c_k x^{2p+1-2k}y^{2k}, 
\]
where $p\geq m\geq 0$ and $c_k\in\C$, define the deformation 
$\widetilde{f}(x, y)$ as 
\[
\widetilde{f}(x, y)=
\sum_{k=0}^m c_k (x+p-k)^{\underline{2p+1-2k}}(y+p-m)^{\underline{k}}(y-p+m)^{\overline{k}}. 
\]
\end{definition}

Let $g(x, y)\in\C[x, y]$ and $\eta=g(x, y)\partial_x+g(y,x)\partial_y$. 
The homogenization (\cite[\S 4.1]{suy-yos}) of $\eta$ is contained 
in $D(c\Cat(B_2, m))$ 
(which is also equivalent to  $\eta\in D(\Cat(B_2, m))$)
if and only if the following conditions 
are satisfied: 
\begin{equation}
\label{eq:divisibl}
\begin{split}
&
 \eta x=
g(x, y) \mbox{ is divisible by } \prod_{k=-m}^m(x-k), \mbox{ and }\\
&
\eta (x\pm y)=
g(x, y)\pm g(y, x) \mbox{ is divisible by }\prod_{k=-m}^m(x\pm y-k). 
\end{split}
\end{equation}

\begin{example}
The following functions $\widetilde{f^m_i}(x, y)$ 
satisfy the conditions (\ref{eq:divisibl}): 
\[
\begin{split}
\widetilde{f^1_0}(x,y)
\dot{=}&(x+2)^{\underline{5}}-5(x+1)^{\underline{3}}(y+1)^{\underline{1}}(y-1)^{\overline{1}}\\
=&x(x^2-1)(x^2-4)-5x(x^2-1)(y^2-1),\\
\widetilde{f^1_1}(x,y)\dot{=}&3(x+3)^{\underline{7}}-7(x+2)^{\underline{5}}(y+2)^{\underline{1}}(y-2)^{\overline{1}},\\
&\cdots\\
\widetilde{f^3_3}(x, y)\dot{=}&231(x+9)^{\underline{19}}-1197(x+8)^{\underline{17}}(y+6)^{\underline{1}}(y-6)^{\overline{1}}\\
&+2261(x+7)^{\underline{15}}(y+6)^{\underline{2}}(y-6)^{\overline{2}}-1615(x+6)^{\underline{13}}(y+6)^{\underline{3}}(y-6)^{\overline{3}}.
\end{split}
\]
\end{example}
\begin{conjecture}
\label{conj:divisibl}
For any $m, i\geq 0$, $\widetilde{f^m_i}(x, y)+\widetilde{f^m_i}(y, x)$ 
is divisible by 
$\prod_{k=-m}^m(x+y-k)$. 
\end{conjecture}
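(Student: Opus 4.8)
The plan is to reduce the statement to a pointwise vanishing on $2m+1$ lines and then to mimic, in a finite-difference setting, the integral computation that proves the analogous \emph{continuous} divisibility of $f^m_i(x,y)+f^m_i(y,x)$ by $(x+y)^{2m+1}$.

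First I would record the factored shape of the deformation. Writing $p=2m+i$ and $d=m+i$, and letting $c_0,\dots,c_m$ be defined by $f^m_i(x,y)=\sum_{k=0}^m c_k\,x^{2p+1-2k}y^{2k}$, a direct rewriting of the falling and rising powers in the definition of $\widetilde f$ gives
\[
\widetilde{f^m_i}(x,y)=\sum_{k=0}^m c_k\,A_k(x)\,B_k(y),\quad A_k(x)=x\!\!\prod_{s=1}^{p-k}\!(x^2-s^2),\quad B_k(y)=\prod_{j=0}^{k-1}\!\big(y^2-(d-j)^2\big),
\]
so that each $A_k$ is odd and each $B_k$ is even. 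Consequently $\widetilde g(x,y):=\widetilde{f^m_i}(x,y)+\widetilde{f^m_i}(y,x)$ is symmetric in $x,y$ and odd under $(x,y)\mapsto(-x,-y)$. Since $\prod_{k=-m}^m(x+y-k)$ is squarefree, divisibility is equivalent to the vanishing of $\widetilde g$ on each line $\{x+y=k_0\}$, $k_0\in\{-m,\dots,m\}$. The oddness reduces this to $k_0\in\{0,1,\dots,m\}$, and $k_0=0$ is immediate since $B_k$ even and $A_k$ odd give $\widetilde{f^m_i}(-x,x)=-\widetilde{f^m_i}(x,-x)$. Thus it suffices to treat $1\le k_0\le m$.

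Next I would use the continuous model as a blueprint. On the analytic side $f^m_i(x,y)=\int_0^x h$ with $h=h(t;x,y)=t^{2i}(t^2-x^2)^m(t^2-y^2)^m$ even in $t$ and symmetric in $x,y$; hence $f^m_i(y,x)=\int_0^y h=\int_{-y}^0 h$ and $g(x,y)=\int_{-y}^{x}h\,dt$, and the substitution $t=-y+s(x+y)$ extracts $(x+y)^{2m+1}$ because $h$ carries the factor $(t+y)^m$ at the lower limit and $(t-x)^m$ at the upper limit. I would discretize precisely this identity. The target is a discrete integral representation
\[
\widetilde g(x,y)=\sum_{t=-y}^{x}\widetilde h(t;x,y),
\]
summed over integers $t$, in which $\widetilde h$ is symmetric in $x,y$, is the finite-difference analogue of the even function $h$, and—crucially—vanishes at the $m$ top nodes $t=x,x-1,\dots,x-m+1$ and the $m$ bottom nodes $t=-y,-y+1,\dots,-y+m-1$, these being the discrete shadows of the order-$m$ zeros of $h$ at $t=\pm x,\pm y$. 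Interpreted via telescoping, the right-hand side is a genuine polynomial identity. Granting it, the conclusion is immediate: for each integer value of $x$ with $y=k_0-x$ and $1\le k_0\le m$, the summation range $\{-y,\dots,x\}$ consists of $k_0+1\le m+1$ consecutive integers, all of which are nodes (the top nodes already cover $\{x-m+1,\dots,x\}\supseteq\{-y+1,\dots,x\}$ and the bottom nodes supply $t=-y$), so every summand vanishes and $\widetilde g=0$ there; since this holds for infinitely many integers $x$ and $\widetilde g(x,k_0-x)$ is a polynomial in $x$, it vanishes identically on the line. The explicit $m=1$ collapse is exactly this mechanism: after $y=1-x$ the three terms share $x(x^2-1)(x-2)$ and the residual bracket $(x+2)-(x-3)-5$ is zero.

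The hard part will be constructing $\widetilde h$ and proving the representation $\widetilde g=\sum_{t=-y}^x\widetilde h$. The difficulty is real: a direct check shows that the naive choice $\widetilde h=t^{2i}(t^2-x^2)^m(t^2-y^2)^m$, summed against the central-factorial antidifference normalised by $\sum_t t^{[2c]}=x^{[2c+1]}/(2c+1)$ (where $x^{[2c+1]}=x\prod_{s=1}^{c}(x^2-s^2)$), does \emph{not} reproduce $\widetilde{f^m_i}$, so the correct discrete integrand must carry position-dependent shifts. To find and verify it I would exploit the two-term recurrences $A_k(x)=(x^2-(p-k)^2)A_{k+1}(x)$ and $B_k(y)=(y^2-(d-k+1)^2)B_{k-1}(y)$ and run an Abel summation in $k$, matching the outcome to a telescoping central difference $\widetilde h=\delta_t\Phi$ whose boundary terms are precisely the node factors above. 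An induction on $m$, with the $m=1$ computation as base case and the recurrences supplying the inductive step, is the route I would pursue to pin down $\widetilde h$ and to establish simultaneously the representation and the node vanishing, thereby completing the proof.
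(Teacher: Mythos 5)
This statement is stated in the paper as Conjecture \ref{conj:divisibl}; the paper offers \emph{no proof} of it, so your argument has to stand entirely on its own, and it does not. Your preliminary reductions are correct as far as they go: the factorization $\widetilde{f^m_i}(x,y)=\sum_{k=0}^m c_k A_k(x)B_k(y)$ with $A_k(x)=x\prod_{s=1}^{p-k}(x^2-s^2)$ odd and $B_k(y)=\prod_{j=0}^{k-1}\bigl(y^2-(m+i-j)^2\bigr)$ even does follow from the definition of the falling and rising powers; divisibility by the squarefree product $\prod_{k_0=-m}^{m}(x+y-k_0)$ is equivalent to vanishing on each line $x+y=k_0$; the symmetry and oddness observations correctly dispose of $k_0\leq 0$; and your $m=1$ collapse on $x+y=1$ checks out. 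But all of this is the easy half, and the entire content of the conjecture is concentrated in the step you explicitly defer: producing a polynomial $\widetilde{h}(t;x,y)$, symmetric in $x,y$, satisfying the discrete-integral identity $\widetilde{g}=\sum_{t=-y}^{x}\widetilde{h}(t;x,y)$ \emph{and} vanishing identically at the nodes $t=x,\dots,x-m+1$ and $t=-y,\dots,-y+m-1$. You concede that the naive discretization of $t^{2i}(t^2-x^2)^m(t^2-y^2)^m$ fails to reproduce $\widetilde{f^m_i}$, and what you offer in its place (Abel summation in $k$, an induction on $m$) is a heuristic search strategy with no candidate formula, no existence proof, and no base case for the representation itself --- your $m=1$ computation verifies the \emph{conclusion} on one line, not the existence of $\widetilde{h}$. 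This is exactly the obstruction the authors flag in \S\ref{sec:conj} (``for type $B_\ell$, the discrete integral does not work''), which is why $\widetilde{f}$ is defined coefficient-by-coefficient rather than as a discrete integral of anything, and why the statement is left as a conjecture. Reducing the conjecture to ``find the right $\widetilde{h}$'' restates the problem; it does not solve it.

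A further point shows the postulated representation is not an innocuous intermediate step that could be established independently and then applied. By telescoping, $\sum_{t=-y}^{x}\widetilde{h}=\Phi(x+1;x,y)-\Phi(-y;x,y)$ for an antidifference $\Phi$ of $\widetilde{h}$ in $t$; setting $y=-x-1$ makes the two arguments coincide, so \emph{any} polynomial admitting such a representation is automatically divisible by $x+y+1$ --- one of the very factors you are trying to establish. (For $m=0$, where $x+y+1$ is not among the target factors, $\widetilde{f^0_i}(x,y)+\widetilde{f^0_i}(y,x)=c_0\bigl[(x+i)^{\underline{2i+1}}+(y+i)^{\underline{2i+1}}\bigr]$ is in fact \emph{not} divisible by $x+y+1$, e.g.\ it equals $-c_0$ on that line when $i=0$; so representations with these limits do not exist generically.) Thus existence of $\widetilde{h}$ with properties (i) and (ii) already encodes a substantial part of the conjecture. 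Likewise, your proposed induction on $m$ has nothing to run on: the coefficients $c_k$ change with $m$, and the relation that links different $m$ in the continuous setting (the primitive derivation, $\nabla_D$-action on $\eta_u^m$) has no established discrete analogue for $\widetilde{f^m_i}$. As it stands, the proposal is a research program, not a proof.
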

Taking into account the fact that 
$\widetilde{f^m_i}(x, y)$ is divisible by 
$\prod_{k=-m}^m(x-k)$ (which is straightforward from (\ref{eq:fmi})), 
Conjecture \ref{conj:divisibl} implies that vector fields 
\[
\begin{split}
\widetilde{\eta^m_0}
&=
\widetilde{f^m_0}(x, y)\partial_x+\widetilde{f^m_0}(y, x)\partial_y,\\
\widetilde{\eta^m_1} 
&=
\widetilde{f^m_1}(x, y)\partial_x+\widetilde{f^m_1}(y, x)\partial_y
\end{split}
\]
form a basis of $D(\Cat(B_2, m))$, and that 
the 
homogenization of these vector fields 
together with the Euler vector field form a basis of 
$D(c\Cat(B_2, m))$.

\end{document}